\documentclass[12pt]{article}

\title{On the roots of the subtree polynomial}

\author{Jason I.\ Brown\footnote{Supported by an NSERC grant CANADA, Grant number RGPIN 170450-2013}\\
\small Dalhousie University\\
\small jason.brown@dal.ca\\[10pt]
Lucas Mol\\
\small University of Winnipeg\\
\small l.mol@uwinnipeg.ca}
\date{}

\usepackage[margin=1.17in]{geometry}

\usepackage{amsmath}
\usepackage{amsthm}
\usepackage{amsfonts}

\usepackage[numbers,sort&compress]{natbib}

\newtheorem{theorem}{Theorem}[section]
\newtheorem{lemma}[theorem]{Lemma}

\newtheorem{corollary}[theorem]{Corollary}
\newtheorem{proposition}[theorem]{Proposition}
\newtheorem{conjecture}[theorem]{Conjecture}

\theoremstyle{definition}

\usepackage{color}

\usepackage{tikz}
\tikzstyle{vertex}=[circle, draw, inner sep=0pt, minimum size=4pt,fill=black]
\newcommand{\vertex}{\node[vertex]}
\usetikzlibrary{decorations.pathreplacing}

\usepackage{enumitem}

\begin{document}

\maketitle

\begin{abstract}
For a tree $T$, the \emph{subtree polynomial} of $T$ is the generating polynomial for the number of subtrees of $T$. We show that the complex roots of the subtree polynomial are contained in the disk $\left\{z\in\mathbb{C}\colon\ |z|\leq 1+\sqrt[3]{3}\right\}$, and that $K_{1,3}$ is the only tree whose subtree polynomial has a root on the boundary.  We also prove that the closure of the collection of all real roots of subtree polynomials contains the interval $[-2,-1]$, while the intervals $(\infty,-1-\sqrt[3]{3})$, $[-1,0)$, and $(0,\infty)$ are root-free.
\end{abstract}

\section{Introduction}

For a given (finite) tree $T$, let $s_k(T)$ denote the number of subtrees (i.e.\ connected induced subgraphs) of $T$ of order $k$.  The \emph{subtree polynomial} of $T$, denoted $\Phi_T(x)$, is the generating polynomial for the number of subtrees of $T$, that is,
\[
\Phi_T(x)= \sum_{k=1}^n s_k(T)x^k.
\]
For example, one can easily verify that for the star $K_{1,n-1}$ and the path $P_n$ of order $n$, we have
\[
\Phi_{K_{1,n-1}}(x)=x(1+x)^{n-1}+(n-1)x,
\]
and 
\[
\Phi_{P_n}(x)=nx+(n-1)x^{2} + \dots + 2x^{n-1}+x^{n}=\sum_{k=1}^n(n-k+1)x^k.
\]
A linear time algorithm for computing the subtree polynomial was given in~\cite{YanYeh2006}. 

The coefficients of the subtree polynomial were the subject of intense study by Jamison~\cite{Jamison1983,Jamison1984,Jamison1987,Jamison1990}.  Recently, Ralaivaosaona and Wagner studied the distribution of the subtree orders of a tree~\cite{RalaivaosaonaWagner2018}.  Roughly speaking, they demonstrated that if a tree has sufficiently many leaves and no long branchless paths, then the distribution of the subtree orders is close to a Gaussian distribution.  This relates to a question of Jamison regarding when the coefficients $s_2(T),s_3(T),\dots, s_n(T)$ form a unimodal sequence (c.f.~\cite{RalaivaosaonaWagner2018}).

The subtree polynomial of $T$ encodes several useful invariants of $T$, including the total number of subtrees of $T$, the mean subtree order of $T$, and the independence number of $T$.  The total number of subtrees of $T$, given by $\Phi_T(1),$ has become an important topological index.  Much work has been done on finding the tree(s) in a given family that maximize or minimize the number of subtrees~\cite{SillsWang2015,YanYeh2006,ZhangZhang2015,ZhangZhangGrayWang2013}.  The \emph{mean subtree order} of $T$, introduced by Jamison~\cite{Jamison1983} as a rough measure of the shape of the lattice of subtrees of $T$ is given by $\frac{\Phi'_T(1)}{\Phi_T(1)}$.  Jamison posed six open problems on the mean subtree order in~\cite{Jamison1983}, five of which have recently been solved, along with some subsequent questions~\cite{MolOellermann2017,VinceWang2010,WagnerWang2014,WagnerWang2016,Haslegrave2014}.  Finally, it was shown in~\cite{Jamison1987} that $\Phi_T(-1)=-\alpha(T)$, where $\alpha(T)$ is the independence number of $T$.  Given the interest in various evaluations of the subtree polynomial, it is natural to ask about other properties of the subtree polynomial.

For many graph polynomials, such as chromatic~\cite{DongKohTeo2005}, reliability~\cite{BrownMol2017}, domination~\cite{BrownTufts2014}, 
edge cover~\cite{CsikvariOboudi2011} and independence polynomials~\cite{LevitMandrescu2005}, there has been significant interest in the location and distribution of their roots.  In this article, we study the location and distribution of the roots of the subtree polynomial.  For a tree $T$, we refer to the roots of the subtree polynomial of $T$ as the \emph{subtree roots} of $T$.

We note that the subtrees of a tree $T$ are exactly the connected induced subgraphs of $T$, so the subtree polynomial of $T$ is precisely the \emph{connected set polynomial} (the generating polynomial for the number of connected induced subgraphs) of $T$.  The roots of the connected set polynomial were studied in~\cite{BrownMol2016}, in connection with the node reliability polynomial.  It was shown that the connected set polynomial of every graph of order at least $3$ has a nonreal root (hence every tree of order at least $3$ has a nonreal subtree root).  It was also shown that the closure of the collection of roots of connected set polynomials of all connected graphs is dense in the entire complex plane~\cite{BrownMol2016}.  However, we demonstrate here that the roots of the subtree polynomial are bounded in modulus by the constant $1+\sqrt[3]{3}$.  See Figure~\ref{Roots14} for an illustration of the subtree roots of all trees of order at most $14$ in the complex plane.

\begin{figure}[htb]
    \centering
    \begin{tikzpicture}
    \node[inner sep=0pt,left] (plot) at (0,0)
    {\includegraphics[scale=0.5]{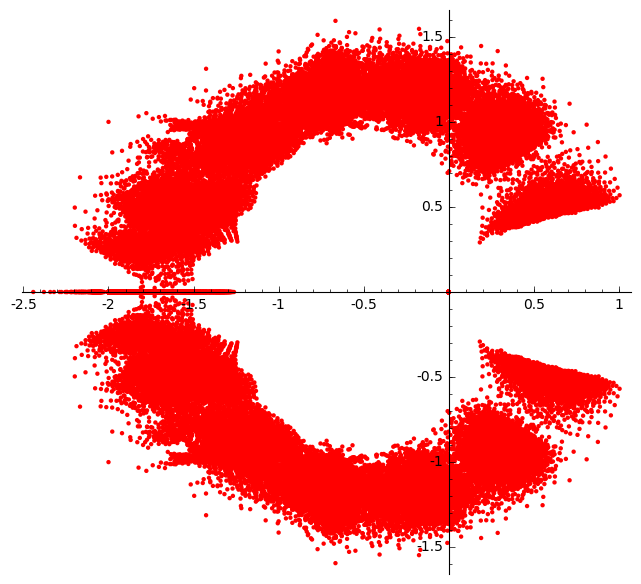}};
    \node[right] at (-0.05,-0.05) {\footnotesize $\mbox{Re}(z)$};
    \node[above] at (-2.4,3.55) {\footnotesize $\mbox{Im}(z)$};
    \end{tikzpicture}
    \caption{The subtree roots of all trees of order at most $14$.}
    \label{Roots14}
\end{figure}


The layout of the article is as follows.  In Section~\ref{BoundSection}, we prove that for any tree $T$, the subtree roots of $T$ lie in the disk $\left\{z\in\mathbb{C}\colon |z|\leq 1+\sqrt[3]{3}\right\}$, and that $K_{1,3}$ is the only tree with a subtree root on the boundary of the disk.  
In Section~\ref{RealLineSection}, we prove that the intervals $(-\infty,-1-\sqrt[3]{3})$, $[-1,0)$ and $(0,\infty)$ are free of subtree roots, and that the closure of the collection of all real subtree roots contains the interval $[-2,-1].$  In Section~\ref{Conclusion}, we discuss several open problems and conjectures.

We require several short definitions and observations which have all appeared in the literature (see~\cite{Jamison1983}, for example).  If $v$ is a vertex of $T$, then for every $k\in\{1,\dots,n\}$, let $s_k(T,v)$ denote the number of subtrees of $T$ of order $k$ that contain $v$.  Then the \emph{local subtree polynomial} of $T$ at $v$ is defined by
\[
\Phi_{T,v}(x)=\sum_{k=1}^n s_k(T,v)x^k.
\]
We also extend the definition of subtree polynomial to forests, mainly for convenience, as forests arise as vertex-deleted subgraphs of trees.  If $F$ is a forest with component trees $T_1,T_2,\dots, T_k$, then the subtree polynomial of $F$ is given by
\[
\Phi_{F}(x)=\sum_{i=1}^k \Phi_{T_i}(x).
\]

Let $T$ be a tree (or forest) with vertex $v$.  Then it is easy to see that 
\begin{align}
\Phi_{T}(x)& = \Phi_{T,v}(x)+\Phi_{T-v}(x), \label{PhiDecomposition}
\end{align}
since every subtree of $T$ either contains $v$ or does not contain $v$.  Moreover, if $v$ has degree $d$ in $T$, and neighbours $u_1,\dots,u_d$ belonging to components $T_1,\dots,T_d$ of $T-v$, respectively, then
\begin{align}
\Phi_{T,v}(z) & = z\prod_{i=1}^d\left[1+\Phi_{T_i,u_i}(z)\right]. \label{PhiProduct}
\end{align}

\section{Subtree roots are bounded in modulus}\label{BoundSection}

In this section, we prove that for any tree $T$, the subtree roots of $T$ lie in the disk $\left\{z\in\mathbb{C}\colon\ |z|\leq 1+\sqrt[3]{3}\right\}$, and that $K_{1,3}$ is the only tree with a subtree root on the boundary of the disk.  The proof technique is somewhat similar to that of Csikvari and Oboudi's proof that the roots of the edge cover polynomial are bounded in modulus~\cite{CsikvariOboudi2011}.  We begin with a lower bound on the modulus of the local subtree polynomial $\Phi_{T,v}(z)$ for $|z|\geq 2$, which improves a bound from~\cite{MolThesis}.

\begin{lemma}\label{LocalBound}
Let $T$ be a tree of order $n$, and let $z\in\mathbb{C}$ with $|z|\geq 2$.  Then for every vertex $v$ of $T$, $|\Phi_{T,v}(z)|\geq |z|\cdot (|z|-1)^{n-1}$.
\end{lemma}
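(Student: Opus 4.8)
The plan is to induct on the order $n$ of the tree $T$, using the recursive structure captured in equation~\eqref{PhiProduct}. For the base case $n=1$, we have $\Phi_{T,v}(z)=z$, so $|\Phi_{T,v}(z)|=|z|$, which matches $|z|\cdot(|z|-1)^{0}=|z|$ exactly. For the inductive step, I would fix a vertex $v$ of degree $d$ with neighbours $u_1,\dots,u_d$ lying in components $T_1,\dots,T_d$ of $T-v$, and apply~\eqref{PhiProduct} to write $|\Phi_{T,v}(z)| = |z|\cdot\prod_{i=1}^d\bigl|1+\Phi_{T_i,u_i}(z)\bigr|$. The goal is then to bound each factor $\bigl|1+\Phi_{T_i,u_i}(z)\bigr|$ from below in terms of the orders $n_i:=|V(T_i)|$, so that the product telescopes to the desired $(|z|-1)^{n-1}$.

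The key estimate I would establish is that, for each component, $\bigl|1+\Phi_{T_i,u_i}(z)\bigr|\geq (|z|-1)^{n_i}$. Granting this for the moment, the product becomes $|z|\cdot\prod_{i=1}^d (|z|-1)^{n_i} = |z|\cdot(|z|-1)^{\sum_i n_i} = |z|\cdot(|z|-1)^{n-1}$, since the component orders sum to $n-1$ (all vertices of $T$ except $v$). This is exactly the claimed bound, so everything hinges on the per-component estimate. To prove that estimate, I would apply the inductive hypothesis to the smaller tree $T_i$ at its vertex $u_i$, giving $|\Phi_{T_i,u_i}(z)|\geq |z|\cdot(|z|-1)^{n_i-1}$, and then use the reverse triangle inequality in the form $\bigl|1+\Phi_{T_i,u_i}(z)\bigr|\geq |\Phi_{T_i,u_i}(z)|-1$.

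The main obstacle, and the place where the hypothesis $|z|\geq 2$ is essential, is closing the gap between the lower bound $|z|\cdot(|z|-1)^{n_i-1}-1$ coming from the inductive step and the target $(|z|-1)^{n_i}$. Writing $r=|z|$ and $m=n_i$, the required inequality is
\[
r(r-1)^{m-1}-1\;\geq\;(r-1)^{m},
\]
which rearranges to $(r-1)^{m-1}\bigl(r-(r-1)\bigr)\geq 1$, i.e.\ $(r-1)^{m-1}\geq 1$. Since $r\geq 2$ forces $r-1\geq 1$, this holds for every $m\geq 1$, so the condition $|z|\geq 2$ is precisely what guarantees $r-1\geq 1$ and makes the telescoping work. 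I expect the only subtlety is to handle the $m=1$ (leaf) case carefully, where $(r-1)^{0}=1$ gives equality in the final step, and to confirm that the reverse triangle inequality is applied in the correct direction throughout.
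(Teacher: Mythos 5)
Your proposal is correct and follows essentially the same route as the paper's proof: induction on the order, the product formula \eqref{PhiProduct}, the reverse triangle inequality applied to each factor $\left|1+\Phi_{T_i,u_i}(z)\right|$, and the observation that $(|z|-1)^{n_i-1}\geq 1$ when $|z|\geq 2$ to close the gap between $|z|\cdot(|z|-1)^{n_i-1}-1$ and $(|z|-1)^{n_i}$. The paper phrases this last step as replacing $-1$ by $-(|z|-1)^{n_i-1}$ rather than rearranging the inequality, but the two manipulations are identical in substance.
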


\begin{proof}
Throughout the proof, let $|z|\geq 2$.  The proof is by induction on $n$.  If $n=1$, then
\[
\left|\Phi_{T,v}(z)\right|=|z| = |z|(|z|-1)^{1-1},
\]
so the statement is verified for $n=1$.
Now for some $n\geq 2$, suppose that the statement holds for all trees of order strictly less than $n$.  Let $T$ be a tree of order $n$ with vertex $v$.  Let the neighbours of $v$ be $u_1,\hdots,u_d.$  Recall from (\ref{PhiProduct}) that
\begin{align*}
\Phi_{T,v}(z)=z\cdot\prod_{i=1}^{d}\left[1+\Phi_{T_i,u_i}(z)\right],
\end{align*}
where $T_i$ is the component of $T-v$ containing $u_i.$  Thus we have
\begin{align*}
\left|\Phi_{T,v}(z)\right|&=|z|\cdot\prod_{i=1}^d\left|1+\Phi_{T_i,u_i}(z)\right|\geq |z|\cdot\prod_{i=1}^d\left[ \left|\Phi_{T_i,u_i}(z)\right|-1\right] .
\end{align*}
Let $n_i$ denote the order of tree $T_i$.  Then by the induction hypothesis,
\begin{align*}
\left|\Phi_{T,v}(z)\right| &\geq |z|\cdot \prod_{i=1}^d\left[|z|\cdot (|z|-1)^{n_i-1}-1\right].
\end{align*}
Since $|z|\geq 2$, it follows that
\begin{align*}
\left|\Phi_{T,v}(z)\right|&\geq |z|\cdot \prod_{i=1}^d\left[|z|\cdot (|z|-1)^{n_i-1}-(|z|-1)^{n_i-1}\right]\\
&\geq |z|\cdot \prod_{i=1}^d(|z|-1)^{n_i}\\
&=|z|\cdot (|z|-1)^{n-1}.\qedhere
\end{align*}
\end{proof}

The following corollary is immediate.

\begin{corollary}
Let $T$ be a tree of order $n$ with vertex $v$.  Then the roots of the local subtree polynomial of $T$ at $v$ lie in the disk $\{z\in\mathbb{C}\colon\ |z|<2\}$. \hfill \qed
\end{corollary}

To prove the main result of this section, we demonstrate that for every tree $T$ and every vertex $v$ of $T$, we have $\left|\Phi_{T,v}(z)\right|>\left|\Phi_{T-v}(z)\right|$ for all $|z|>1+\sqrt[3]{3}$, from which it follows that $\left|\Phi_{T}(z)\right|>0$, and thus $\Phi_T(z)\neq 0$.  The argument is by induction on the order of $T$.  We actually consider the possibility that $|z|=1+\sqrt[3]{3}$ as well, so that we can prove that $K_{1,3}$ is the only tree with a subtree root of this modulus.  

Before we proceed with the proof, we handle some base cases.  First, we deal with the tree $K_{1,3}$.  It is easily verified that $\Phi_{K_{1,3}}(z)=z^4+3z^3+3z^2+4z$ has roots $0$, $-1-\sqrt[3]{3}$, and $-1+\left(\sqrt[3]{3}\pm i\sqrt[6]{3^5}\right)/2$, so the following lemma is best possible.

\begin{lemma}\label{K13Lemma}
Let $v$ be a vertex of $T=K_{1,3}$.  If $|z|\geq 1+\sqrt[3]{3},$ then $\left|\Phi_{T,v}(z)\right|\geq \left|\Phi_{T-v}(z)\right|$.
\end{lemma}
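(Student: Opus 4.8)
The plan is to exploit the symmetry of $K_{1,3}$: up to automorphism there are only two choices for $v$, namely the central vertex $c$ and a leaf $\ell$, so it suffices to verify the inequality in these two cases. In each case I would first record the two polynomials explicitly using the product rule~(\ref{PhiProduct}) together with the fact that a single vertex contributes $z$ and the local polynomial $\Phi_{P_3,c}(z)=z(1+z)^2$ at the centre $c$ of $P_3$. When $v=c$ this gives $\Phi_{T,v}(z)=z(1+z)^3$ and $\Phi_{T-v}(z)=3z$; when $v=\ell$ it gives $\Phi_{T,v}(z)=z\bigl(z^3+2z^2+z+1\bigr)$ and $\Phi_{T-v}(z)=z\bigl(z^2+2z+3\bigr)$. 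As a check, summing the two polynomials in either case recovers $\Phi_{K_{1,3}}(z)=z^4+3z^3+3z^2+4z$.

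The central case is immediate. For $z\neq 0$ the desired inequality is equivalent to $|1+z|^3\geq 3$, that is, $|1+z|\geq\sqrt[3]{3}$, and this follows from the reverse triangle inequality $|1+z|\geq |z|-1\geq\sqrt[3]{3}$; for $z=0$ both sides vanish. Equality forces $|z|-1=|1+z|=\sqrt[3]{3}$, which on the circle $|z|=1+\sqrt[3]{3}$ occurs only at $z=-1-\sqrt[3]{3}$.

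The leaf case is the crux. After cancelling the common factor $z$, I must show $|P(z)|\geq|Q(z)|$ with $P(z)=z^3+2z^2+z+1$ and $Q(z)=z^2+2z+3$ for all $|z|\geq 1+\sqrt[3]{3}$. The naive approach of bounding $|P|$ below and $|Q|$ above by the triangle inequality is far too lossy here, precisely because equality is attained at the boundary point $z=-1-\sqrt[3]{3}$. Instead I would fix $r=|z|$, write $z=re^{i\theta}$, and expand $|P(z)|^2-|Q(z)|^2$ as a cosine polynomial; substituting $c=\cos\theta$ together with $\cos 2\theta=2c^2-1$ and $\cos 3\theta=4c^3-3c$ turns it into a cubic in $c$ with positive leading coefficient $8r^3$, namely
\[
|P|^2-|Q|^2 = 8r^3c^3+(4r^4-4r^2)c^2+(4r^5-6r^3-10r)c+(r^6+r^4-r^2-8).
\]
The plan is then to minimise this cubic over $c\in[-1,1]$ for each fixed $r\geq 1+\sqrt[3]{3}$. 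Differentiating in $c$ gives a quadratic whose minimum over all real $c$ simplifies to $\tfrac{2r}{3}\bigl(5r^4-7r^2-16\bigr)$; since $5r^4-7r^2-16>0$ for $r\geq 1+\sqrt[3]{3}$, the cubic is strictly increasing in $c$ on $[-1,1]$, so its minimum occurs at $c=-1$, i.e. at $z=-r$. Finally, evaluating at $c=-1$ yields a degree-six polynomial in $r$ that factors as $\bigl(r^3-3r^2+3r-4\bigr)\bigl(r^3-r^2-r+2\bigr)$: the first factor is the minimal polynomial of $1+\sqrt[3]{3}$ and is nonnegative for $r\geq 1+\sqrt[3]{3}$ (its derivative is $3(r-1)^2\geq 0$, and it vanishes at $r=1+\sqrt[3]{3}$), while the second factor is clearly positive there, so the whole expression is nonnegative.

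The main obstacle is exactly this leaf case: because the inequality is sharp at $z=-1-\sqrt[3]{3}$, no crude modulus estimate can succeed, and one is forced into the exact reduction above. The two delicate points are (i) verifying that the $c$-derivative stays positive on $[-1,1]$, which is what pins the minimum to the negative real axis, and (ii) recognising the factorisation through the minimal polynomial $r^3-3r^2+3r-4$ of $1+\sqrt[3]{3}$, which is precisely what makes the boundary value vanish and confirms that the bound is best possible.
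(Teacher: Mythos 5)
Your proposal is correct, and in the decisive leaf case it takes a genuinely different route from the paper. The paper handles the central vertex exactly as you do (reverse triangle inequality, $|z|\cdot|z+1|^3\geq|z|(|z|-1)^3\geq 3|z|$), but for the leaf it works in Cartesian coordinates: it writes $z=x+iy$, reduces $|z^3+2z^2+z+1|^2\geq|z^2+2z+3|^2$ to the nonnegativity of an explicit polynomial in $x$ and $y$, then repeatedly invokes the constraint $y^2\geq\left(1+\sqrt[3]{3}\right)^2-x^2$, splits into the cases $x\geq -1-\sqrt[3]{3}$ and $x<-1-\sqrt[3]{3}$, and verifies positivity of several resulting one-variable polynomials. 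Your polar-coordinate argument replaces all of that casework with a single monotonicity claim: I checked your expansion of $|P|^2-|Q|^2$ as the cubic $8r^3c^3+(4r^4-4r^2)c^2+(4r^5-6r^3-10r)c+(r^6+r^4-r^2-8)$ in $c=\cos\theta$, the computation that the $c$-derivative has minimum value $\tfrac{2r}{3}\left(5r^4-7r^2-16\right)>0$ for $r\geq 1+\sqrt[3]{3}$ (indeed $5u^2-7u-16>0$ already for $u=r^2\geq 4$), and the factorization at $c=-1$ into $\left(r^3-3r^2+3r-4\right)\left(r^3-r^2-r+2\right)$; all are correct, and the remaining one-variable positivity facts are routine. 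What your approach buys is transparency about sharpness: it pins the extremal point to the negative real axis and exhibits the boundary equality at $z=-1-\sqrt[3]{3}$ as the vanishing of the factor $(r-1)^3-3$, the minimal polynomial of $1+\sqrt[3]{3}$, whereas in the paper's treatment the tightness of the constant is buried inside inequalities that degenerate at that point. The paper's approach, in exchange, avoids trigonometric identities entirely and stays within elementary real-polynomial estimates, at the cost of heavier and less illuminating case analysis.
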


\begin{proof}
Let $|z|\geq 1+\sqrt[3]{3}$.  First suppose that $v$ is the vertex of degree $3$ in $T$.  Then 
\[
\left|\Phi_{T,v}(z)\right|=|z|\cdot |z+1|^{3}\geq |z|\cdot \left(|z|-1\right)^{3}\geq 3|z|=\left|\Phi_{T-v}(z)\right|.
\]

Now suppose that $v$ is a leaf of $T$.  For ease of notation, let $c=\sqrt[3]{3}$.  Note that $\Phi_{T,v}(z)=z^4+2z^3+z^2+z$ and $\Phi_{T-v}(z)=z^3+2z^2+3z$, so we show
\begin{align}\label{K13}
|z^3+2z^2+z+1|^2\geq |z^2+2z+3|^2.
\end{align}
Write $z=x+iy$, where $x,y\in\mathbb{R}$.  By substituting and simplifying, we find that (\ref{K13}) is equivalent to 
\begin{align}\label{K13sub}
\begin{split}
&(x^3+3x^2+3x+4)(x^3+x^2-x-2)\\
&\ \ +y^2\left[3x^4+8x^3+6x^2-6x-1+y^2\left(3x^2+4x+1+y^2\right)\right]\geq 0.
\end{split}
\end{align}
First we show that the expression in the square brackets of (\ref{K13sub}) is positive.  Since $|z|\geq  (1+c)$, we have $y^2\geq (1+c)^2-x^2$. Using this inequality repeatedly, we find
\begin{align*}
&\mathrel{\phantom{=}}3x^4+8x^3+6x^2-6x-1+y^2\left(3x^2+4x+1+y^2\right)\\
&\geq 3x^4+8x^3+6x^2-6x-1+y^2\left(2(x+1)^2+c^2+2c\right)\\
&\geq 3x^4+8x^3+6x^2-6x-1+\left((1+c)^2-x^2\right)\left(2(x+1)^2+c^2+2c\right)\\
&=x^4+4x^3+\left(c^2+2c+6\right)x^2+2(2c^2+4c-1)x+(c^4+4c^3+7c^2+6c+1)\\
&=x^4+4x^3+\left(c^2+2c+6\right)x^2+2(2c^2+4c-1)x+(7c^2+9c+13),
\end{align*}
where the fact that $c^3=3$ was used at the last step.  We verify that this last expression is positive for all $x\in\mathbb{R}$. Now we show that the left hand side of (\ref{K13sub}) is nonnegative.  If $x\geq -1-c$, then using $y^2\geq (1+c)^2-x^2$, we have
\begin{align*}
&\mathrel{\phantom{=}}(x^3+3x^2+3x+4)(x^3+x^2-x-2)\\
&\ \ \ \ +y^2\left[3x^4+8x^3+6x^2-6x-1+y^2\left(3x^2+4x+1+y^2\right)\right]\\
&\geq (x^3+3x^2+3x+4)(x^3+x^2-x-2)\\
&\ \ \ \ +\left((1+c)^2-x^2)\right)\left[x^4+4x^3+\left(c^2+2c+6\right)x^2+2(2c^2+4c-1)x+(7c^2+9c+13)\right]\\
&=2(x+1+c)\left(4x^2 + 2(c^2 - 2)x + 7c^2 + 12c + 16\right),
\end{align*}
which we verify is nonnegative for $x\geq -1-c$. On the other hand, if $x<-1-c$, then $y^2\geq 0$, hence the left hand side of (\ref{K13sub}) is at least $(x^3+3x^2+3x+4)(x^3+x^2-x-2)$, which we verify is positive for $x<-1-c$.
\end{proof}

\begin{lemma}\label{BaseCase}
Let $T$ be a tree of order at most $4$ not isomorphic to $K_{1,3}$, and let $v$ be a vertex of $T$.  If $|z|\geq 1+\sqrt[3]{3}$, then $\left|\Phi_{T,v}(z)\right|> \left|\Phi_{T-v}(z)\right|$. 
\end{lemma}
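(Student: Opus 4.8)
The plan is to enumerate all trees of order at most $4$ and verify the inequality directly for each, vertex by vertex. Up to isomorphism, the only trees of order at most $4$ are $K_1$, $P_2$, $P_3$, $P_4$, and $K_{1,3}$; excluding $K_{1,3}$ leaves the four paths $K_1$, $P_2$, $P_3$, and $P_4$. For each such tree $T$ and each vertex $v$ (taken up to the automorphisms of $T$, so that only a leaf and the centre of $P_3$, and only a leaf and an interior vertex of $P_4$, need to be treated), I would compute $\Phi_{T,v}(z)$ from (\ref{PhiProduct}) and $\Phi_{T-v}(z)$ from the forest definition. In every case both polynomials are divisible by $z$, and since $|z|\geq 1+\sqrt[3]{3}>0$ we may cancel this factor and reduce to an inequality $|f(z)|>|g(z)|$ between two polynomials of small degree.

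Several of these reductions are immediate. For $T=K_1$ the inequality becomes $|z|>0$; for the leaf of $P_2$ it becomes $|1+z|>1$; and for the centre of $P_3$ it becomes $|1+z|^2>2$. Each follows at once from $|1+z|\geq |z|-1\geq \sqrt[3]{3}$, together with $\sqrt[3]{3}>1$ and $\sqrt[3]{9}>2$. The remaining three cases are genuinely two-dimensional: writing $c=\sqrt[3]{3}$ as in Lemma~\ref{K13Lemma}, the leaf of $P_3$ requires $|1+z+z^2|>|2+z|$, the interior vertex of $P_4$ requires $|(1+z)(1+z+z^2)|>|3+z|$, and the leaf of $P_4$ requires $|1+z+z^2+z^3|>|3+2z+z^2|$.

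For these three I would follow the template of the proof of Lemma~\ref{K13Lemma}: square both sides, write $z=x+iy$, and reduce $|f(z)|^2-|g(z)|^2$ to a polynomial in $x$ and $y^2$. Using the hypothesis $|z|\geq 1+c$ in the form $y^2\geq (1+c)^2-x^2$, and splitting into the ranges $x\geq -1-c$ and $x<-1-c$ when the coefficient of a power of $y^2$ can change sign (exactly as in Lemma~\ref{K13Lemma}), each reduces to verifying that an explicit univariate polynomial in $x$, whose coefficients are polynomials in $c$ simplified using $c^3=3$, is positive for all real $x$ or for $x$ on the relevant half-line. These verifications can be completed by factoring the resulting quartics and sextics or by a routine discriminant and sign analysis.

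The main obstacle is that the naive modulus estimate is not strong enough near the boundary $|z|=1+c$: for the leaf of $P_3$, for instance, the reverse triangle inequality $|1+z+z^2|\geq |z|^2-|z|-1$ combined with $|2+z|\leq |z|+2$ only yields the inequality for $|z|>3$, so the constraint $|z|\geq 1+c$ must be used essentially rather than as a one-line bound. This is precisely why the two-dimensional expansion is needed. The bookkeeping is heaviest for the leaf of $P_4$, where $f$ has degree $3$ and the substitution produces a degree-six polynomial in $x$; but the method is identical to that of the preceding lemma, and the inequality comes out strict throughout because $1+c$ strictly exceeds the modulus of every subtree root of $K_1$, $P_2$, $P_3$, and $P_4$.
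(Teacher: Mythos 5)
Your proposal is correct and follows essentially the same route as the paper: enumerate $K_1$, $P_2$, $P_3$, $P_4$, dispatch the star-centre cases with the crude bound $|z+1|\geq |z|-1\geq\sqrt[3]{3}$, and handle the leaf of $P_3$ and both vertex orbits of $P_4$ by squaring, writing $z=x+iy$, and using the modulus hypothesis as a lower bound on $y^2$, exactly as in Lemma~\ref{K13Lemma}. The only (immaterial) difference is that the paper proves these three hard cases under the weaker hypothesis $|z|\geq 2$, which keeps all coefficients rational, whereas you carry $c=\sqrt[3]{3}$ through the algebra; your observation that the one-variable reverse-triangle-inequality estimate fails below $|z|=3$ matches the reason the paper resorts to the same two-dimensional expansion.
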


\begin{proof}
Let $|z|\geq 1+\sqrt[3]{3}$.  If $T$ has order $1$, then $\left|\Phi_{T,v}(v)\right|=|z|>0=\left|\Phi_{T-v}(z)\right|$.  If $T\cong K_{1,n-1}$ for some $n\in\{2,3\}$, and $v$ is a vertex of degree $n-1$ in $T$, then 
\[
\left|\Phi_{T,v}(z)\right|=|z|\cdot |z+1|^{n-1}\geq |z|\cdot \left(|z|-1\right)^{n-1}\geq|z|\cdot \left(\sqrt[3]{3}\right)^{n-1}>|z|\cdot(n-1)=\left|\Phi_{T-v}(z)\right|.
\]
There are three remaining cases.

\medskip

\noindent
\textbf{Case I:} $T\cong K_{1,2}$ and $v$ is a leaf.  We prove the stronger statement that if $|z|\geq 2$, then $\left|\Phi_{T,v}(z)\right|> \left|\Phi_{T-v}(z)\right|$.  So let $|z|\geq 2$.  Note that $\Phi_{T,v}(z)=z^3+z^2+z$ and $\Phi_{T-v}(z)=z^2+2z$, so we need to show that 
\begin{align}\label{P3}
|z^2+z+1|^2> |z+2|^2.
\end{align}
Write $z=x+iy$, where $x,y\in\mathbb{R}$.  By substituting and simplifying, we find that (\ref{P3}) is equivalent to 
\[
(x^2-1)(x^2+2x+3)+y^2(2x^2+2x-2+y^2)> 0.
\]
Since $|z|\geq 2$, we have $y^2\geq 4-x^2$.  Using this inequality along with the fact that $x^2+2x+2=(x+1)^2+1> 0$ for all $x\in\mathbb{R}$, we find
\begin{align*}
&\mathrel{\phantom{=}} (x^2-1)(x^2+2x+3)+y^2(2x^2+2x-2+y^2)\\
&\geq (x^2-1)(x^2+2x+3)+y^2(x^2+2x+2)\\
&\geq (x^2-1)(x^2+2x+3)+(4-x^2)(x^2+2x+2)\\
&= 4x^2 + 6x + 5,
\end{align*}
which is easily verified to be strictly positive for all $x\in\mathbb{R}$.

\medskip

\noindent
\textbf{Case II:} $T\cong P_4$ and $v$ is a leaf. We prove the stronger statement that if $|z|\geq 2$, then $\left|\Phi_{T,v}(z)\right|> \left|\Phi_{T-v}(z)\right|$.  So let $|z|\geq 2$. Note that $\Phi_{T,v}(z)=z^4+z^3+z^2+z$ and $\Phi_{T-v}(z)=z^3+2z^2+3z$, so we need to show that 
\begin{align}\label{P4}
|z^3+z^2+z+1|^2> |z^2+2z+3|^2.
\end{align} 
Write $z=x+iy$, where $x,y\in\mathbb{R}$.  By substituting and simplifying, we find that (\ref{P4}) is equivalent to 
\begin{align}\label{P4sub}
\begin{split}
&(x^3 - x - 2) (x^3 + 2 x^2 + 3 x + 4)\\  
&\ \ + y^2\left[(x - 1) (3 x^3 + 7 x^2 + 7 x - 1)+y^2(3x^2 + 2x - 2+y^2)\right]> 0.
\end{split}
\end{align}
First we show that the expression in the square brackets of (\ref{P4sub}) is positive. Using the facts that $y^2\geq 4-x^2$ and that $2x^2+2x+2=2(x+\tfrac{1}{2})^2+\tfrac{3}{22}> 0$, we obtain
\begin{align*}
&\mathrel{\phantom{=}} (x- 1) (3 x^3 + 7 x^2 + 7 x - 1)+y^2(3x^2 + 2x - 2+y^2)\\
&\geq  (x - 1) (3 x^3 + 7 x^2 + 7 x - 1)+y^2(2x^2 + 2x+2)\\
&\geq  (x - 1) (3 x^3 + 7 x^2 + 7 x - 1)+(4-x^2)(2x^2+2x+2)\\
&= x^4 + 2x^3 + 6x^2 + 9.
\end{align*}
Now one can verify that $x^4+2x^3+6x^2+9> 0$ for all $x\in\mathbb{R}$. We use the bound $y^2\geq \max\{4-x^2,0\}$ to bound the left side of (\ref{P4sub}) as follows:
\begin{align*}
&\mathrel{\phantom{=}} (x^3 - x - 2) (x^3 + 2 x^2 + 3 x + 4) \\
&\mathrel{\phantom{=}} \ \ +y^2\left[(x - 1) (3 x^3 + 7 x^2 + 7 x - 1)+y^2(3x^2 + 2x - 2+y^2)\right]\\
&\geq  (x^3 - x - 2) (x^3 + 2 x^2 + 3 x + 4)  + y^2(x^4 + 2x^3 + 6x^2 + 9).\\
&\geq \max\{(x^3 - x - 2) (x^3 + 2 x^2 + 3 x + 4)  + (4-x^2)(x^4 + 2x^3 + 6x^2 + 9),\\
&  \hspace{2cm} (x^3 - x - 2) (x^3 + 2 x^2 + 3 x + 4)\}\\
&=\max\{2(4 x^3 + 4 x^2 - 5 x + 14),(x^3 - x - 2) (x^3 + 2 x^2 + 3 x + 4)\}.
\end{align*}
The proof is completed by verifying that $2(4 x^3 + 4 x^2 - 5 x + 14)> 0$ for all $x\geq -2$, while $(x^3 - x - 2) (x^3 + 2 x^2 + 3 x + 4)>0$ for all $x<-2$.

\medskip

\noindent
\textbf{Case III:} $T\cong P_4$ and $v$ is not a leaf.  We prove the stronger statement that if $|z|\geq 2$, then $\left|\Phi_{T,v}(z)\right|> \left|\Phi_{T-v}(z)\right|$.  So let $|z|\geq 2$. Note that $\Phi_{T,v}(z)=z^4+2z^3+2z^2+z$ and $\Phi_{T-v}(z)=z^2+3z$, so we need to show that 
\begin{align}\label{P4Central}
|z^3+2z^2+2z+1|^2> |z+3|^2.
\end{align} 
Write $z=x+iy$, where $x,y\in\mathbb{R}$.  By substituting and simplifying, we find that (\ref{P4Central}) is equivalent to 
\begin{align}\label{P4CentralSub}
\begin{split}
&(x^3 + 2x^2 + 3x + 4)(x^3 + 2x^2 + x - 2)\\
&\ \ +y^2\left[(3x^3 + 5x^2 + 3x - 1)(x + 1)+y^2(3x^2 + 4x+y^2)\right]>0.
\end{split}
\end{align}
First we show that the expression in the square brackets of (\ref{P4CentralSub}) is positive.  We use the supposition that $y^2\geq 4-x^2$, and the fact that $2x^2+4x+4=2(x+1)^2+2>0$, to obtain
\begin{align*}
&\mathrel{\phantom{=}} (3x^3 + 5x^2 + 3x - 1)(x + 1)+y^2(3x^2 + 4x+y^2)\\
&\geq (3x^3 + 5x^2 + 3x - 1)(x + 1)+y^2(2x^2 + 4x+4)\\
&\geq (3x^3 + 5x^2 + 3x - 1)(x + 1)+(4-x^2)(2x^2 + 4x+4)\\
&= x^4 + 4 x^3 + 12 x^2 + 18 x + 15,
\end{align*}
which we can verify is positive for all $x\in\mathbb{R}$.  Using this fact along with the bound $y^2\geq \max\{4-x^2,0\}$, we bound the left side of (\ref{P4CentralSub}) as follows:
\begin{align*}
&\mathrel{\phantom{=}}(x^3 + 2x^2 + 3x + 4)(x^3 + 2x^2 + x - 2)\\
&\mathrel{\phantom{=}}\ \   +y^2\left[(3x^3 + 5x^2 + 3x - 1)(x + 1)+y^2(3x^2 + 4x+y^2)\right]\\
&\geq \max\{(x^3 + 2x^2 + 3x + 4)(x^3 + 2x^2 + x - 2)+(4-x^2)(x^4 + 4 x^3 + 12 x^2 + 18 x + 15),\\
& \hspace{2cm} (x^3 + 2x^2 + 3x + 4)(x^3 + 2x^2 + x - 2)\}\\
&=\max\{2(4 x^3 + 20 x^2 + 35 x + 26),(x^3 + 2x^2 + 3x + 4)(x^3 + 2x^2 + x - 2)\}.
\end{align*}
The proof is completed by verifying that $2(4 x^3 + 20 x^2 + 35 x + 26)>0$ for $x\geq -2$, while $(x^3 + 2x^2 + 3x + 4)(x^3 + 2x^2 + x - 2)>0$ for $x < -2$.
\end{proof}

We are now ready to prove the main result of this section.

\begin{theorem}\label{GlobalBound}
The subtree roots of every tree $T$ lie in the disk 
\[
D=\left\{z\in\mathbb{C}\colon\ |z| \leq 1 + \sqrt[3]{3}\right\},
\]
and the only tree with a subtree root on the boundary of $D$ is $K_{1,3}$.
\end{theorem}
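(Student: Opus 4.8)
The plan is to reduce the theorem to a single multiplicative inequality and then induct. Concretely, I would aim to show that for every tree $T$ and every vertex $v$,
\[
|\Phi_{T,v}(z)| > |\Phi_{T-v}(z)| \qquad \text{whenever } |z|\ge 1+\sqrt[3]{3},
\]
with the sole exception of $T\cong K_{1,3}$ (for which only the $\ge$ of Lemma~\ref{K13Lemma} is available). Granting this, (\ref{PhiDecomposition}) and the reverse triangle inequality give, for every $T\not\cong K_{1,3}$ and every $|z|\ge 1+\sqrt[3]{3}$, that $|\Phi_T(z)|\ge |\Phi_{T,v}(z)|-|\Phi_{T-v}(z)|>0$, so all subtree roots of such a $T$ lie strictly inside $D$. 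For $T\cong K_{1,3}$ I would instead use the explicit roots recorded just before Lemma~\ref{K13Lemma}, namely $0$, $-1-\sqrt[3]{3}$, and $-1+(\sqrt[3]{3}\pm i\sqrt[6]{3^5})/2$; a direct modulus computation shows the last two have modulus $\sqrt{1-\sqrt[3]{3}+\sqrt[3]{9}}<1+\sqrt[3]{3}$, so $-1-\sqrt[3]{3}$ is the unique root of any tree on $\partial D$.

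The key inequality I would prove by induction on $n=|V(T)|$, the cases $n\le 4$ being exactly Lemmas~\ref{K13Lemma} and~\ref{BaseCase}. Fix $n\ge 5$, a vertex $v$, and write $c=\sqrt[3]{3}$. If $v$ has degree $1$, with neighbour $u$ in $T'=T-v$, then (\ref{PhiProduct}) gives $\Phi_{T,v}(z)=z(1+\Phi_{T',u}(z))$ while $\Phi_{T-v}(z)=\Phi_{T',u}(z)+\Phi_{T'-u}(z)$. Writing $a=\Phi_{T',u}(z)$, the inductive hypothesis on $T'$ yields $|\Phi_{T'-u}(z)|\le|a|$, hence $|\Phi_{T-v}(z)|\le 2|a|$, while Lemma~\ref{LocalBound} gives $|a|\ge |z|(|z|-1)^{\,n-2}\ge 3|z|$ (using $n-2\ge 3$, $|z|-1\ge c$, and $c^3=3$). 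It then suffices to check $|z|(|a|-1)>2|a|$, which follows since $|a|\ge 3|z|$ and $|z|>7/3$.

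For $\deg(v)=d\ge 2$, let $u_1,\dots,u_d$ have components $T_1,\dots,T_d$ in $T-v$. I would separate the $\ell$ leaf-neighbours (with $|V(T_i)|=1$, each contributing the exact term $\Phi_{T_i}(z)=z$ and the factor $|1+z|\ge c$) from the $m=d-\ell$ non-leaf neighbours. For a non-leaf neighbour, the subtree $T^{(i)}$ consisting of $v$ joined to $T_i$ by the edge $vu_i$ has order at most $n-1$, so the inductive hypothesis at $v$ gives $|\Phi_{T_i}(z)|=|\Phi_{T^{(i)}-v}(z)|\le|\Phi_{T^{(i)},v}(z)|=|z|\,R_i$, where $R_i:=|1+\Phi_{T_i,u_i}(z)|$; Lemma~\ref{LocalBound} further yields $R_i\ge|z|(|z|-1)-1\ge c^2+c-1$. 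Feeding these into (\ref{PhiProduct}) and the triangle inequality, the claim reduces (after dividing by $|z|$) to the purely numerical inequality
\[
|1+z|^{\ell}\prod_{i=1}^{m} R_i \;>\; \ell+\sum_{i=1}^{m} R_i,
\]
with $|1+z|\ge c$, each $R_i\ge c^2+c-1$, and $\ell+m\ge 2$.

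I expect this numerical inequality to be the heart of the argument, and I would prove it by induction on $m$: the base case $m=0$ is $|1+z|^{\ell}\ge c^{\ell}>\ell$, which holds for all $\ell\ge 2$ \emph{except precisely} $\ell=3$, where $c^3=3=\ell$; the inductive step multiplies through by a factor $R_m>g/(g-1)$, where $g=\ell+\sum_{i<m}R_i\ge 2$. The delicate feature, and the main obstacle, is that $c^3=3$ makes the configuration of three leaf-neighbours and nothing else (that is, $(\ell,m)=(3,0)$) exactly tight; this is the algebraic shadow of the extremal star $K_{1,3}$. One must verify that this is the only equality case, that it cannot occur when $n\ge 5$ (since $(\ell,m)=(3,0)$ forces $T\cong K_{1,3}$), and that the constant $c^2+c-1$ is \emph{just} large enough — in particular $c^2+c-1>1/(c-1)$ — to push the remaining small cases $(\ell,m)\in\{(1,1),(0,2),(3,1)\}$ through strictly. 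Aligning these constants is exactly where the value $1+\sqrt[3]{3}$ is forced, so this calculation, rather than the structural induction, is where the real work lies.
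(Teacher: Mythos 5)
Your proposal is correct, and although it shares the paper's global skeleton---the reduction via (\ref{PhiDecomposition}) and the reverse triangle inequality to the claim $|\Phi_{T,v}(z)|>|\Phi_{T-v}(z)|$ for $|z|\ge 1+\sqrt[3]{3}$, the same base cases (Lemmas~\ref{K13Lemma} and~\ref{BaseCase}), the same device of weakening the induction hypothesis to a non-strict inequality so that $K_{1,3}$ can be included, and the same reliance on Lemma~\ref{LocalBound}---your inductive step uses a genuinely different decomposition. Writing $c=\sqrt[3]{3}$: the paper splits $T$ along a \emph{single edge} $uv$ into components $T_1\ni v$ and $T_2\ni u$, writes $\Phi_{T,v}=\Phi_{T_1,v}\cdot(1+\Phi_{T_2,u})$ and $\Phi_{T-v}=\Phi_{T_1-v}+\Phi_{T_2}$, and disposes of three cases ($T_1$ trivial, $T_2$ trivial, both of order at least $2$, the last reducing to $\left(|\Phi_{T_1,v}(z)|-2\right)\left(|\Phi_{T_2,u}(z)|-2\right)>4$). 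You instead decompose around the \emph{entire neighbourhood} of $v$, separating the $\ell$ leaf branches from the $m$ non-leaf branches, and---a wrinkle not in the paper---apply the induction hypothesis to the auxiliary trees $T^{(i)}$ (branch plus $v$), which are legitimately of smaller order since $\deg(v)\ge 2$, to get $|\Phi_{T_i}(z)|\le |z|R_i$. This reduces everything to the numerical inequality $|1+z|^{\ell}\prod_i R_i>\ell+\sum_i R_i$, and I have checked that it does hold under your constraints ($|1+z|\ge c$, $R_i\ge c^2+c-1$, $\ell+m\ge 2$, $(\ell,m)\neq(3,0)$) in all cases that can arise for $n\ge 5$: your mini-induction on $m$ is valid whenever $g\ge 2$, and your list of hand-checked cases---$(\ell,0)$ with $\ell\ge 2$, $\ell\neq 3$, plus $(1,1)$, $(0,2)$, $(3,1)$---is exactly the right one, with $(1,1)$ resting on $c^2+c-1>1/(c-1)$, $(0,2)$ on $c^2+c-2>1$, and $(3,1)$ on $2(c^2+c-1)>3$, all true. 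As to what each route buys: the paper's binary edge-split keeps the case analysis short and uniform, with no auxiliary numerical lemma; your star decomposition is more laborious but makes the extremal role of $K_{1,3}$ and of the constant $1+\sqrt[3]{3}$ completely transparent, since the unique tight configuration is $(\ell,m)=(3,0)$ (three leaf branches and nothing else), i.e.\ exactly $K_{1,3}$, which is excluded once $n\ge 5$. One caution for a write-up: the step from $(\ell,m-1)$ to $(\ell,m)$ genuinely requires $g\ge 1+1/(R_m-1)$, which fails for $g\in\{0,1\}$; that is precisely why $(1,1)$ and $(0,2)$ must be verified directly rather than derived from the false or vacuous statements at $(1,0)$ and $(0,1)$, so this point should be made explicit rather than left at ``$g\ge 2$.''
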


\begin{proof}
Let $T$ be a tree with vertex $v$.  If $T\cong K_{1,3}$, then $\Phi_T\left(-1-\sqrt[3]{3}\right)=0$ (giving the root on the boundary of $D$), and by inspection, all other subtree roots of $K_{1,3}$ lie in the interior of $D$.

Now suppose that $T$ is not isomorphic to $K_{1,3}$.  Recall from (\ref{PhiDecomposition}) that we have $\Phi_T(z)=\Phi_{T,v}(z)+\Phi_{T-v}(z)$.  By the reverse triangle inequality,
\[
\left|\Phi_{T}(z)\right| \geq \left|\Phi_{T,v}(z)\right|-\left|\Phi_{T-v}(z)\right|.
\]
We claim that if $|z|\geq 1+\sqrt[3]{3}$, then 
\[
\left|\Phi_{T,v}(z)\right|>\left|\Phi_{T-v}(z)\right|,
\]
from which it follows immediately that the subtree roots of $T$ lie in the interior of $D$.
To prove the claim, we proceed by induction on the order of $T$.  First of all, if $T$ has order at most $4$, then the claim holds by Lemma~\ref{BaseCase}.  Now suppose, for some $n\geq 5$, that for every tree $S$ of order strictly less than $n$ not isomorphic to $K_{1,3}$, and every vertex $u$ of $S$, if $|z|\geq 1+\sqrt[3]{3}$, then $\left|\Phi_{S,u}(z)\right|>\left|\Phi_{S-u}(z)\right|$. By Lemma~\ref{K13Lemma}, we also know, for every vertex $u$ of $K_{1,3}$, that if $|z|\geq 1+\sqrt[3]{3}$, then $\left|\Phi_{K_{1,3},u}(z)\right|\geq \left|\Phi_{K_{1,3}-u}(z)\right|$.  So altogether, for every tree $S$ of order strictly less than $n$ (including $K_{1,3}$), and every vertex $u$ of $S$, if $|z|\geq 1+\sqrt[3]{3}$, then $\left|\Phi_{S,u}(z)\right|\geq \left|\Phi_{S-u}(z)\right|$.  For convenience, we refer to this last statement as the induction hypothesis.

Suppose that $T$ has order $n$.  We wish to show that if $|z|\geq 1+\sqrt[3]{3}$, then $\left|\Phi_{T,v}(z)\right|>\left|\Phi_{T-v}(z)\right|$.  So let $|z|\geq 1+\sqrt[3]{3}$, and let $u$ be a neighbour of $v$.  The graph $T-uv$ is a forest with exactly two components.  Let $T_1$ be the component of $T-uv$ containing $v$, and let $T_2$ be the component of $T-uv$ containing $u$.  Then
\begin{align}\label{v}
\Phi_{T,v}(z)=\Phi_{T_1,v}(z)\cdot\left(1+\Phi_{T_2,u}(z)\right),
\end{align}
and 
\begin{align}\label{nov}
\Phi_{T-v}(z)=\Phi_{T_1-v}(z)+\Phi_{T_2}(z).
\end{align}
At this point, we consider three cases.

\medskip

\noindent
\textbf{Case I:} The tree $T_1$ has order $1$.

\noindent
From (\ref{v}) and (\ref{nov}), we have $\Phi_{T,v}(z)=z\cdot\left(1+\Phi_{T_2,u}(z)\right)$ and $\Phi_{T-v}(z)=\Phi_{T_2}(z)$, so it suffices to show that
\[
|z|\cdot \left|1+\Phi_{T_2,u}(z)\right|>\left|\Phi_{T_2}(z)\right|.
\]
Now 
\[
|z|\cdot \left|1+\Phi_{T_2,u}(z)\right|\geq |z|\cdot \left(\left|\Phi_{T_2,u}(z)\right|-1\right)=|z|\cdot \left|\Phi_{T_2,u}(z)\right|-|z|,
\] 
and
\[
\left|\Phi_{T_2}(z)\right|\leq \left|\Phi_{T_2,u}(z)\right|+\left|\Phi_{T_2-u}(z)\right|\leq 2\left|\Phi_{T_2,u}(z)\right|
\]
by the induction hypothesis.  Therefore, it suffices to show that
\[
|z|\cdot \left|\Phi_{T_2,u}(z)\right|-|z|> 2\left|\Phi_{T_2,u}(z)\right|,
\]
or equivalently,
\[
\left(|z|-2\right)\cdot \left|\Phi_{T_2,u}(z)\right|> |z|.  
\]
By Lemma~\ref{LocalBound}, since $T_2$ has order $n-1$, we have
\begin{align*}
\left(|z|-2\right)\cdot \left|\Phi_{T_2,u}(z)\right|&\geq \left(|z|-2\right)\cdot\left(|z|-1\right)^{n-2}\cdot |z|\\
&\geq \left(\sqrt[3]{3}-1\right)\cdot \left(\sqrt[3]{3}\right)^{3}\cdot |z|\\
&>|z|,
\end{align*}
where we used $n\geq 5$ and $|z|\geq 1+\sqrt[3]{3}$.

\medskip

\noindent
\textbf{Case II:} The tree $T_2$ has order $1$.

\noindent
From (\ref{v}) and (\ref{nov}), we have $\Phi_{T,v}(z)=\Phi_{T_1,v}(z)\cdot(1+z)$ and $\Phi_{T-v}(z)=\Phi_{T_1-v}(z)+z.$  Thus
\[
\left|\Phi_{T,v}(z)\right|\geq \left|\Phi_{T_1,v}(z)\right|\cdot \left(|z|-1\right)=\left|\Phi_{T_1,v}(z)\right|+\left(|z|-2\right)\cdot\left|\Phi_{T_1,v}(z)\right|,
\]
and
\[
\left|\Phi_{T-v}(z)\right|\leq \left|\Phi_{T_1-v}(z)\right|+|z|\leq  \left|\Phi_{T_1,v}(z)\right|+|z|,
\]
where the induction hypothesis was used at the last inequality.  Thus, it suffices to show that
\[
\left(|z|-2\right)\cdot\left|\Phi_{T_1,v}(z)\right|> |z|.
\]
This follows by an argument analogous to that used in Case I.

\medskip

\noindent
\textbf{Case III:} Both $T_1$ and $T_2$ have order at least $2$.

\noindent
From (\ref{v}), we have
\begin{align*}
\left|\Phi_{T,v}(z)\right|&\geq \left|\Phi_{T_1,v}(z)\right|\cdot\left(\left|\Phi_{T_2,u}(z)\right|-1\right)\\
&=\left|\Phi_{T_1,v}(z)\right|+ \left|\Phi_{T_1,v}(z)\right|\cdot\left(\left|\Phi_{T_2,u}(z)\right|-2\right).
\end{align*}
From (\ref{nov}), we have
\begin{align*}
\left|\Phi_{T-v}(z)\right|&\leq \left|\Phi_{T_1-v}(z)\right|+\left|\Phi_{T_2}(z)\right|\\
&\leq \left|\Phi_{T_1-v}(z)\right|+\left|\Phi_{T_2,u}(z)\right|+\left|\Phi_{T_2-u}(z)\right|\\
&\leq \left|\Phi_{T_1,v}(z)\right|+2\left|\Phi_{T_2,u}(z)\right|,
\end{align*}
where the induction hypothesis was used at the last inequality.  So it suffices to show that
\[
\left|\Phi_{T_1,v}(z)\right|\cdot\left(\left|\Phi_{T_2,u}(z)\right|-2\right)> 2\left|\Phi_{T_2,u}(z)\right|,
\]
or equivalently,
\[
\left(\left|\Phi_{T_1,v}(z)\right|-2\right)\left(\left|\Phi_{T_2,u}(z)\right|-2\right)> 4.
\]
Let $n_i$ denote the order of $T_i$ for $i\in\{1,2\}$.  Since $n\geq 5$ and $n_1,n_2\geq 2$, we must have either $n_1\geq 3$ and $n_2\geq 2$, or $n_1\geq 2$ and $n_2\geq 3$. Using this fact along with Lemma~\ref{LocalBound} and the inequality $|z|\geq 1+\sqrt[3]{3}$, we have
\begin{align*}
\left(\left|\Phi_{T_1,v}(z)\right|-2\right)\left(\left|\Phi_{T_2,u}(z)\right|-2\right)&\geq \left(|z|\cdot\left(|z|-1\right)^{n_1-1}-2\right)\left(|z|\cdot\left(|z|-1\right)^{n_2-1}-2\right)\\
&\geq \left(|z|\cdot\left(|z|-1\right)^2-2\right)\left(|z|\cdot\left(|z|-1\right)-2\right)\\
&\geq \left((1+\sqrt[3]{3})\left(\sqrt[3]{3}\right)^2-2\right)\left((1+\sqrt[3]{3})\sqrt[3]{3}-2\right)\\
&>4.
\end{align*}
This completes the proof of the theorem.
\end{proof}

\section{Real subtree roots}\label{RealLineSection}

Since the coefficients of the subtree polynomial are all positive, it follows immediately that $(0,\infty)$ is a root-free interval of the real line for the subtree polynomial.  By Theorem~\ref{GlobalBound}, the interval $\left(-\infty,-1-\sqrt[3]{3}\right)$ is also free of subtree roots.  Moreover, both intervals are maximal in this sense, as $-1-\sqrt[3]{3}$ and $0$ are both subtree roots of $K_{1,3}$ (in fact, every tree has subtree root $0$).  In this section, we prove that the interval $[-1,0)$ is also free of subtree roots.  It follows that the collection $\mathcal{R}$ of real subtree roots of all trees is contained in $[-1-\sqrt[3]{3},-1]\cup\{0\}$.  We demonstrate that the closure of $\mathcal{R}$ contains the interval  $[-2,-1]$.  Hence, the interval $[-1,0)$ is a maximal root-free interval for the subtree polynomial as well.

First, we show that the interval $[-1,0)$ is free of subtree roots.  We actually prove the stronger result that the subtree polynomial is increasing on the interval $(-1,0)$, from which the desired result easily follows.  The proof relies on the useful fact that $x\Phi'_{T}(x)$ is equal to the sum of all local subtree polynomials of $T$.  This fact was observed by Jamison~\cite{Jamison1987}, but we provide justification below for completeness.

\begin{theorem}\label{Derivative}
Let $T$ be a tree.  Then $\Phi'_T(x)> 0$ for $x\in(-1,0)$.
\end{theorem}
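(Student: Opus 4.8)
The plan is to prove this via the identity mentioned in the lead-up: $x\Phi'_T(x)$ equals the sum of all local subtree polynomials of $T$. Concretely, I would first establish
\[
x\,\Phi'_T(x)=\sum_{v\in V(T)}\Phi_{T,v}(x).
\]
This follows by a counting argument: differentiating $\Phi_T(x)=\sum_k s_k(T)x^k$ gives $x\Phi'_T(x)=\sum_k k\,s_k(T)x^k$, and the factor $k$ counts, for each subtree of order $k$, its $k$ vertices; summing over vertices $v$ and regrouping by which subtrees contain $v$ yields exactly $\sum_v\Phi_{T,v}(x)$. Since this is attributed to Jamison and the excerpt promises justification, I would include this short double-counting verification.

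**Next**, having reduced the claim to showing $\sum_{v}\Phi_{T,v}(x)>0$ on $(-1,0)$, the goal becomes a positivity statement for local subtree polynomials. The natural approach is induction on the order $n$ of $T$, using the product formula~(\ref{PhiProduct}),
\[
\Phi_{T,v}(x)=x\prod_{i=1}^d\bigl[1+\Phi_{T_i,u_i}(x)\bigr],
\]
together with the decomposition~(\ref{PhiDecomposition}). On the interval $(-1,0)$ we have $x<0$, so the sign of each factor $1+\Phi_{T_i,u_i}(x)$ must be controlled carefully. I would aim to prove a stronger inductive statement that captures both the sign and a bound on the magnitude of $\Phi_{T,v}(x)$ for $x\in(-1,0)$ — most plausibly that $-1<\Phi_{T,v}(x)<0$ (equivalently $0<1+\Phi_{T,v}(x)<1$) for every vertex $v$ and every $x\in(-1,0)$. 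If this holds, then in the product each bracket $1+\Phi_{T_i,u_i}(x)$ lies in $(0,1)$, the product is positive, and multiplying by $x<0$ makes $\Phi_{T,v}(x)$ negative, while the product being less than $1$ keeps it above $-1$; the induction would then close cleanly.

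**The main obstacle** I anticipate is pinning down precisely the right inductive invariant so that both bounds propagate through the product formula. The lower bound $\Phi_{T,v}(x)>-1$ is the delicate one: one must verify that $x\cdot(\text{product of terms in }(0,1))$ cannot drop below $-1$ when $x\in(-1,0)$, which is immediate from $|x|<1$ and each factor being less than $1$, but establishing the base cases and confirming that the strict inequalities are preserved (rather than degenerating at endpoints) requires care. There may also be a subtlety if one wishes to conclude strict positivity of the \emph{sum} $\sum_v\Phi_{T,v}(x)$ rather than of individual terms — though since every term is strictly negative on $(-1,0)$, the sum is strictly negative, and hence $x\Phi_T'(x)<0$, giving $\Phi_T'(x)>0$ because $x<0$. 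I would close by noting that continuity then extends non-vanishing to the endpoints, yielding that $[-1,0)$ is root-free as claimed in the surrounding discussion.
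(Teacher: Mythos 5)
Your proposal matches the paper's proof essentially step for step: the double-counting identity $x\Phi'_T(x)=\sum_{v\in V}\Phi_{T,v}(x)$, followed by induction on the order of $T$ with the strengthened invariant $-1<\Phi_{T,v}(x)<0$ propagated through the product formula, and the final division by $x<0$ to flip the sign. The only blemish is your intermediate statement that the claim reduces to showing $\sum_{v}\Phi_{T,v}(x)>0$ on $(-1,0)$ --- the correct reduction is $\sum_{v}\Phi_{T,v}(x)<0$ (precisely because $x<0$), which is exactly what your closing paragraph establishes, so the argument as completed is correct.
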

\begin{proof}
Let $T$ be a tree of order $n$ with vertex set $V$, and let $\Phi_T(x)=\displaystyle\sum_{k=1}^ns_kx^k$.  First, note that 
\begin{align*}
x\Phi'_T(x)=\sum_{k=1}^n ks_kx^k=\sum_{v\in V}\Phi_{T,v}(x).
\end{align*}
This follows from the fact that every subtree $S$ of $T$ of order $k$ is counted exactly $k$ times in the sum on the right (once for each of the $k$ vertices in $S$).  So to show that $\Phi'_T(x)>0$ for all $x\in(-1,0)$, it suffices to show that $\Phi_{T,v}(x)<0$ for all $v\in V$ and $x\in (-1,0)$.  We prove the stronger statement that $-1<\Phi_{T,v}(x)<0$ for all $v\in V$ and $x\in(-1,0)$.  The proof is by induction on the order of $T$.

The statement is easily verified for a tree of order $1$.  Suppose that the statement holds for all trees of order strictly less than $n$.  Let $T$ be a tree of order $n$ and let $v$ be an arbitrary vertex of $T$.  Let $v$ have neighbours $u_1,\dots,u_d$, where $d=\deg(v)$.  Recall from (\ref{PhiProduct}) that
\begin{align}\label{Recursion}
\Phi_{T,v}(x)&=x\cdot\prod_{i=1}^d \left[1+\Phi_{T_i,v_i}(x)\right],
\end{align}
where $T_i$ is the component of $T-v$ containing $v_i$.  Now suppose that $x\in(-1,0)$. Then $0<1+\Phi_{T_i,v_i}(x)<1$ for all $i\in\{1,\dots,d\}$ by the induction hypothesis.  From~(\ref{Recursion}), we conclude that $-1<\Phi_{T,v}(x)<0$.
\end{proof}

\begin{theorem}
No tree has a real subtree root in the interval $[-1,0)$.
\end{theorem}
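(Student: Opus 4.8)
The plan is to read off the result as a near-immediate consequence of Theorem~\ref{Derivative}. That theorem gives $\Phi_T'(x) > 0$ for all $x \in (-1,0)$. Since $\Phi_T$ is a polynomial, it is continuous on the closed interval $[-1,0]$, and so the positivity of the derivative on the open interval upgrades, via the mean value theorem, to strict monotonicity on the whole closed interval $[-1,0]$. Thus $\Phi_T$ is strictly increasing on $[-1,0]$.

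Next I would pin down the value at the right endpoint: because every monomial of $\Phi_T(x) = \sum_{k=1}^n s_k x^k$ carries a positive power of $x$, the constant term vanishes and $\Phi_T(0) = 0$. Combining this with the strict monotonicity just established, for every $x \in [-1,0)$ we obtain $\Phi_T(x) < \Phi_T(0) = 0$. In particular $\Phi_T(x) \ne 0$ on $[-1,0)$, which is exactly the claim.

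The only point that deserves a second look is the left endpoint $x = -1$, since strict monotonicity on the open interval $(-1,0)$ together with continuity would, on its own, yield only $\Phi_T(-1) \le 0$. The mean value theorem argument above sidesteps this by producing strict monotonicity on the closed interval directly, so that $\Phi_T(-1) < \Phi_T(0) = 0$ for free. Should one prefer a self-contained check of this endpoint, it can instead be handled by the identity $\Phi_T(-1) = -\alpha(T)$ of Jamison~\cite{Jamison1987} recalled in the introduction: since $\alpha(T) \ge 1$ for any tree, $\Phi_T(-1) = -\alpha(T) \le -1 < 0$. Either way, the endpoint constitutes the only (very mild) obstacle, and the substantive work has already been carried out in Theorem~\ref{Derivative}.
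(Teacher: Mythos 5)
Your proof is correct and takes essentially the same approach as the paper: both combine Theorem~\ref{Derivative} with the Mean Value Theorem and the fact that $\Phi_T(0)=0$ to conclude that $\Phi_T(x)<0$ for all $x\in[-1,0)$. Your careful handling of the endpoint $x=-1$ (via strict monotonicity on the closed interval, or alternatively via Jamison's identity $\Phi_T(-1)=-\alpha(T)$) is exactly what the paper's direct application of the Mean Value Theorem on $[x,0]$ accomplishes, since the intermediate point always lies in the open interval $(-1,0)$ where the derivative is strictly positive.
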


\begin{proof}
Let $T$ be a tree.  We show that $\Phi_T(x)<0$ for all $x\in[-1,0)$.  Note that $\Phi_T(0)=0$, and by Theorem~\ref{Derivative}, $\Phi'_T(x)>0$ for all $x \in(-1,0)$.  Applying the Mean Value Theorem, we find $\Phi_T(x)<0$ for all $x\in[-1,0)$.
\end{proof}

Next, we demonstrate that the closure of the collection of real subtree roots of all trees contains the interval $[-2,-1]$, i.e.\ subtree roots are dense in $[-2,-1]$.  We will use the following result from elementary real analysis.

\begin{proposition}[{\cite[Proposition 6.4.5]{Garling2013}}]\label{ContinuousInverse}
If $f$ is a strictly monotonic function on an interval $I$, then  $f^{-1}:f(I)\rightarrow I$ is continuous.
\end{proposition}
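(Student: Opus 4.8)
The plan is to reduce to the strictly increasing case (the decreasing case being symmetric, by replacing $f$ with $-f$) and then give a direct $\epsilon$--$\delta$ proof that $g := f^{-1}$ is continuous at every point of $f(I)$. The crucial structural fact I would exploit is that $g$ is itself strictly increasing on $f(I)$ and that its range is exactly the interval $I$; intuitively, a monotone function whose range contains no gaps cannot jump, and making this precise is the whole content of the result. Note that $f$ is \emph{not} assumed continuous, so I cannot appeal to continuity of $f$ anywhere --- the argument must run entirely on the inverse.

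Fix $y_0 \in f(I)$ and set $x_0 = g(y_0) \in I$, so that $f(x_0) = y_0$. Given $\epsilon > 0$, I first treat the generic case in which $x_0$ lies in the interior of $I$, so that both $x_0 - \epsilon$ and $x_0 + \epsilon$ belong to $I$ for all sufficiently small $\epsilon$. Put $a = f(x_0 - \epsilon)$ and $b = f(x_0 + \epsilon)$; strict monotonicity gives $a < y_0 < b$, so $\delta := \min\{y_0 - a,\ b - y_0\}$ is positive. For any $y \in f(I)$ with $|y - y_0| < \delta$ we have $a < y < b$, and applying the strictly increasing map $g$, together with $g(a) = x_0 - \epsilon$ and $g(b) = x_0 + \epsilon$, yields $x_0 - \epsilon < g(y) < x_0 + \epsilon$. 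This is exactly continuity of $g$ at $y_0$.

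It remains to handle the endpoints. If $x_0$ is, say, the left endpoint of $I$, then $y_0$ is the least element of $f(I)$, so no $y \in f(I)$ satisfies $y < y_0$ and only a one-sided estimate is needed: taking $b = f(x_0 + \epsilon)$ and $\delta = b - y_0$, every $y \in f(I)$ with $y_0 \le y < y_0 + \delta$ satisfies $x_0 \le g(y) < x_0 + \epsilon$. The right-endpoint case is symmetric.

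I expect the only real obstacle to be bookkeeping rather than depth: one must verify that the auxiliary points $x_0 \pm \epsilon$ genuinely lie in $I$ (this is precisely where the hypothesis that $I$ is an interval enters, and is also why the endpoint cases must be separated out), and that the sandwiching step uses the \emph{strictness} of the monotonicity rather than any continuity of $f$. Once these points are organized, the estimate is immediate.
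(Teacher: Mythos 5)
The paper does not prove this proposition at all: it is imported verbatim from Garling's textbook (Proposition 6.4.5) and used as a black box in the proof of Theorem~\ref{Closure}, so there is no internal argument to compare yours against. Judged on its own, your proof is correct and is the standard one for this statement. You correctly identify the essential subtlety --- $f$ is \emph{not} assumed continuous, so $f(I)$ need not be an interval and no appeal to the intermediate value theorem or to continuity of $f$ is available; the argument must run entirely on the inverse $g=f^{-1}$, using that $g$ is strictly monotone and that its \emph{range} $I$ is an interval (which is exactly what makes the auxiliary points $x_0\pm\epsilon$ available). Your interior and endpoint cases are both handled properly, and the one-sided estimates at endpoints are right. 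Two small points you should make explicit if you write this up: (i) the fact that $g$ is strictly increasing on $f(I)$ deserves its one-line verification (if $y_1<y_2$ in $f(I)$ but $g(y_1)\geq g(y_2)$, applying $f$ gives $y_1\geq y_2$, a contradiction); and (ii) the phrase ``for all sufficiently small $\epsilon$'' should be accompanied by the remark that establishing the $\epsilon$--$\delta$ condition for all small $\epsilon$ suffices, since the condition for a given $\epsilon$ implies it for every larger $\epsilon$ with the same $\delta$. Neither is a gap, only bookkeeping, as you anticipated.
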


\begin{theorem}\label{Closure}
The closure of the collection of real subtree roots of all trees contains the interval $[-2,-1]$.
\end{theorem}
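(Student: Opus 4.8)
The plan is to realize a dense subset of $[-2,-1]$ as limits of real subtree roots; since the real subtree roots are already confined to $[-1-\sqrt[3]{3},-1]\cup\{0\}$, density in $[-2,-1]$ yields the statement after taking closures. The family I would use is that of \emph{two-armed spiders}: a centre $v$ carrying $a$ legs equal to $P_\ell$ and $b$ legs equal to $P_{\ell'}$, each attached to $v$ at an endpoint. Writing $Q_\ell(z)=1+z+\cdots+z^\ell$ for $1+\Phi_{P_\ell,u}(z)$ at an endpoint $u$, equations (\ref{PhiDecomposition}) and (\ref{PhiProduct}) give
\[
\Phi_T(z)=z\,Q_\ell(z)^{a}Q_{\ell'}(z)^{b}+a\,\Phi_{P_\ell}(z)+b\,\Phi_{P_{\ell'}}(z).
\]
The governing idea is that the local term is \emph{exponential} in $a+b$ while the remaining term is only \emph{linear}, so their competition pins the nonzero roots near a curve that can be steered across $[-2,-1]$ by tuning the ratio $a/b$.

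First I would record the sign data on $(-2,-1)$. From $Q_\ell(z)=(z^{\ell+1}-1)/(z-1)$ one verifies that for odd $\ell$ (including $\ell=1$) we have $Q_\ell(z)<0$ there, $Q_\ell(-2)<-1$, and $Q_\ell(-1)=0$; one checks there is a unique $z^\star_\ell\in(-2,-1]$ with $Q_\ell(z^\star_\ell)=-1$, and that $z^\star_1=-2$ while $z^\star_\ell\nearrow -1$ as $\ell\to\infty$ through the odds. Fix odd $\ell<\ell'$. For $z\in(z^\star_\ell,z^\star_{\ell'})$ one has $\ln|Q_\ell(z)|<0<\ln|Q_{\ell'}(z)|$, so the \emph{balance equation}
\[
\alpha\ln|Q_\ell(z)|+(1-\alpha)\ln|Q_{\ell'}(z)|=0
\]
has a unique solution $\alpha=\Lambda(z)=\ln|Q_{\ell'}(z)|/\bigl(\ln|Q_{\ell'}(z)|-\ln|Q_\ell(z)|\bigr)\in(0,1)$. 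A routine derivative computation shows $\Lambda$ is strictly monotonic on $(z^\star_\ell,z^\star_{\ell'})$ with $\Lambda\to 1$ and $\Lambda\to 0$ at the two ends, so by Proposition~\ref{ContinuousInverse} the inverse $\Lambda^{-1}\colon(0,1)\to(z^\star_\ell,z^\star_{\ell'})$ is continuous and onto. Hence $\Lambda^{-1}$ sends the rationals in $(0,1)$ to a dense subset of $(z^\star_\ell,z^\star_{\ell'})$; letting $\ell'=\ell+2$ run over the odd integers and taking the union covers all of $(-2,-1)$.

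It remains to show each $z_0=\Lambda^{-1}(a/(a+b))$ is a genuine limit of subtree roots, which is the main obstacle. Fix a rational $\alpha=a/(a+b)$ with $a+b$ odd and $a,b$ large, and scale $(a,b)$ along this ray. Dividing by $z$, on $(-2,-1)$ one has $\Phi_{P_\ell}(z)/z>0$ and $Q_\ell(z)^aQ_{\ell'}(z)^b=(-1)^{a+b}|Q_\ell(z)|^a|Q_{\ell'}(z)|^b$, so with $a+b$ odd
\[
\frac{\Phi_T(z)}{z}=a\,\frac{\Phi_{P_\ell}(z)}{z}+b\,\frac{\Phi_{P_{\ell'}}(z)}{z}-|Q_\ell(z)|^{a}|Q_{\ell'}(z)|^{b},
\]
a positive linear part minus a positive exponential one. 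Just left of $z_0$ the base $|Q_\ell(z)|^{\alpha}|Q_{\ell'}(z)|^{1-\alpha}$ exceeds $1$ and the exponential term dominates, while just right of $z_0$ it is below $1$ and the linear term dominates; thus $\Phi_T$ changes sign near $z_0$, and the intermediate value theorem yields a real root in any prescribed neighbourhood of $z_0$ once $a,b$ are large. Making this rigorous is the delicate part: beyond verifying the monotonicity of $\Lambda$, one must obtain uniform magnitude estimates guaranteeing that the crossover of $|Q_\ell|^{\alpha}|Q_{\ell'}|^{1-\alpha}$ through $1$ occurs within $o(1)$ of $z_0$ and overwhelms the $\ln(a+b)$ correction from the linear term — an elementary, real-variable stand-in for a Beraha--Kahane--Weiss limiting-root argument. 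Granting this, the real subtree roots are dense in each $(z^\star_\ell,z^\star_{\ell+2})$, hence in $(-2,-1)$; as $-2=z^\star_1$ is a root of $P_2$ and $z^\star_\ell\to-1$, the closure contains all of $[-2,-1]$.
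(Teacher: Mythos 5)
Your route is essentially the paper's own, in generalized form. The paper proves Theorem~\ref{Closure} with exactly your two-armed spiders, specialized to leg lengths $(\ell,\ell')=(1,2)$: its tree $T_{a,b}$ is obtained from $K_{1,a+b}$ by attaching a pendant vertex to $b$ of the leaves, and the skeleton of the argument --- the product formula for $\Phi_{T,v}$, the exponential-versus-linear competition after scaling $(a,b)\mapsto (an,bn)$, a balance equation whose parameter-to-root map is strictly monotone, Proposition~\ref{ContinuousInverse}, and density of the rationals --- is identical. The paper's mixed-parity choice is what lets a single family do the whole job: $Q_2(z)=z^2+z+1$ is positive with $|Q_2(z)|>1$ on all of $(-2,-1)$ (after the substitution $z=-1-y$ it becomes $y^2+y+1$), so the balance interval is all of $(-2,-1)$, no union over $\ell$ is needed, the sign bookkeeping needs only $a$ odd, and the linear terms $\Phi_{P_1}(z)/z=1$ and $\Phi_{P_2}(z)/z=z+2$ are trivially positive; your all-odd variant instead relies on the positivity of $\Phi_{P_\ell}(z)/z$ on $(-2,-1)$ for odd $\ell$, which is true but asserted without proof. (Your monotonicity claim for $\Lambda$ is likewise true and genuinely routine: $|Q_\ell|$ and $|Q_{\ell'}|$ are both strictly decreasing in $z$ on $(-2,-1)$, and since $\ln|Q_\ell|<0<\ln|Q_{\ell'}|$ there, the derivative of $\ln|Q_\ell|/\ln|Q_{\ell'}|$ is a sum of two negative terms.)

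The one genuine gap is the step you explicitly defer with ``Granting this'': you claim that making the root-convergence rigorous requires uniform magnitude estimates in the style of a Beraha--Kahane--Weiss argument. It does not, and the conditional phrasing is the only thing separating your outline from a proof. Fix the ratio $\alpha=a/(a+b)$ and $\epsilon>0$, and evaluate at the two \emph{fixed} test points $z_0\pm\epsilon$ \emph{before} letting the scaling parameter $n\to\infty$ (through odd values, so parity is preserved). Writing $B(z)=|Q_\ell(z)|^{\alpha}|Q_{\ell'}(z)|^{1-\alpha}$, the function $\ln B(z)=\alpha\ln|Q_\ell(z)|+(1-\alpha)\ln|Q_{\ell'}(z)|$ is strictly decreasing and vanishes at $z_0$, so $B(z_0-\epsilon)$ is a constant greater than $1$: the exponential term $B(z_0-\epsilon)^{(a+b)n}$ swamps the $O(n)$ linear term, giving $\Phi_T(z_0-\epsilon)/(z_0-\epsilon)<0$ for all large $n$. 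Likewise $B(z_0+\epsilon)<1$, so the exponential term tends to $0$ while the positive linear term grows, giving $\Phi_T(z_0+\epsilon)/(z_0+\epsilon)>0$ for all large $n$. The intermediate value theorem then places a root in $(z_0-\epsilon,z_0+\epsilon)$; no uniformity in $z$ is needed because the test points are fixed first. This pointwise argument is precisely how the paper handles the same step (for each fixed $y$, $f_{an,bn}(y)\to\pm\infty$ according as $y^a(y^2+y+1)^b$ is greater or less than $1$, combined with monotonicity of $f_{an,bn}$ on $(0,\infty)$). With that paragraph supplied, and the two auxiliary facts above verified, your argument is correct.
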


\begin{proof}
Let $T_{a,b}$ denote the tree on $a+2b+1$ vertices obtained from $K_{1,a+b}$ by joining a pendant vertex to exactly $b$ leaves (see Figure~\ref{Tab}).  We prove that the closure of the collection of real subtree roots of all trees in the set $\{T_{a,b}\colon\ a,b\geq 1, a \mbox{ odd}\}$ contains $[-2,-1]$.  

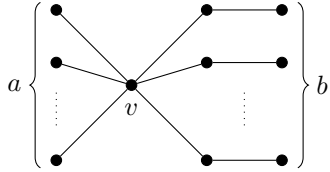
\begin{figure}[htb]
\centering{
\begin{tikzpicture}
\vertex (0) at (0,0) {};
\vertex (1) at (-1,1) {};
\vertex (2) at (-1,0.3) {};
\vertex (3) at (-1,-1) {};
\vertex (4) at (1,1) {};
\vertex (4a) at (2,1) {};
\vertex (5) at (1,0.3) {};
\vertex (5a) at (2,0.3) {};
\vertex (6) at (1,-1) {};
\vertex (6a) at (2,-1) {};
\path
(0) edge (1)
(0) edge (2)
(0) edge (3)
(0) edge (4)
(0) edge (5)
(0) edge (6)
(4) edge (4a)
(5) edge (5a)
(6) edge (6a);
\draw[dotted] (-1,-0.1)--(-1,-0.6);
\draw[dotted] (1.5,-0.1)--(1.5,-0.6);
\draw [decorate,decoration={brace,amplitude=4pt,mirror},xshift=-6pt,yshift=0pt]
(-1,1.1) -- (-1,-1.1) node [left,black,midway,xshift=-3pt]
{\footnotesize $a$};
\draw [decorate,decoration={brace,amplitude=4pt},xshift=6pt,yshift=0pt]
(2,1.1) -- (2,-1.1) node [right,black,midway,xshift=3pt]
{\footnotesize $b$};
\node[below] at (0,-0.1) {\footnotesize $v$};
\end{tikzpicture}
}
\caption{The tree $T_{a,b}$.}\label{Tab}
\end{figure}

Let $a,b\geq 1$ with $a$ odd, and let $v$ be the central vertex of $T_{a,b}$, as labeled in Figure~\ref{Tab}.  Then
\begin{align*}
\Phi_{T_{a,b}}(x)&=\Phi_{T_{a,b},v}(x)+ax+b(x^2+2x)\\
&=x(1+x)^a(1+x+x^2)^b+ax+b(x^2+2x)\\
&=x\left[(1+x)^a(1+x+x^2)^b+a+b(x+2)\right]
\end{align*}
For convenience, we substitute $x=-1-y$, and use the fact that $a$ is odd to simplify.
\begin{align}
\Phi_{T_{a,b}}(-1-y)=(y+1)\left[y^a(y^2+y+1)^b-a-b(1-y)\right]
\end{align}
Let $f_{a,b}(y)=y^a(y^2+y+1)^b-a-b(1-y)$.  It suffices to show that the closure of the collection of real roots of all polynomials in $\{f_{a,b}(y)\colon\ a,b\geq 1, a\mbox{ odd}\}$ contains $(0,1)$.

For a fixed $a,b\geq 1$ with $a$ odd, consider the sequence of functions 
\begin{align*}
f_{an,bn}(y)&=y^{an}(y^2+y+1)^{bn}-an-bn(1-y)\\
&=\left[y^a(y^2+y+1)^b\right]^n-[a+b(1-y)]n,
\end{align*}
where $n\geq 1$ and $n$ is odd (so that $an$ is odd).  Note that $f_{an,bn}$ is increasing for all $y> 0$.  Since $f_{an,bn}(0)=-(a+b)n<0$, and $\displaystyle\lim_{y\rightarrow\infty}f_{an,bn}(y)=\infty$, it follows that $f_{an,bn}(y)$ has a unique positive root.  Further, if $y^a(y^2+y+1)^b>1,$ then
\[
\lim_{n\rightarrow \infty}f_{an,bn}(y)=\infty,
\]
while if $y^a(y^2+y+1)<1$, then
\[
\lim_{n\rightarrow \infty}f_{an,bn}(y)=-\infty.
\]
Therefore, for $n$ sufficiently large, the unique positive root of $f_{an,bn}(y)$ can be made arbitrarily close to the unique positive root of the function
\[
g_{a,b}(y)=y^a(y^2+y+1)^b-1,
\]
which, evidently, lies in $(0,1)$.  Let $\mathcal{R}_g\subseteq (0,1)$ denote the collection of positive roots of all polynomials in $\{g_{a,b}(y)\colon\ a,b\geq 1, a\mbox{ odd}\}$.  It now suffices to show that $\mathcal{R}_g$ is dense in $(0,1)$.

Let $r=r_{a,b}$ denote the unique root of $g_{a,b}(y)$ in $(0,1)$, and let $t=\tfrac{a}{a+b}$.  Then
\[
r^a(r^2+r+1)^b=1 \Leftrightarrow r^{\tfrac{a}{a+b}}(r^2+r+1)^{\tfrac{b}{a+b}}=1 \Leftrightarrow r^t(r^2+r+1)^{1-t}=1.
\]
For every $t\in(0,1)$, define
\[
h_t(y)=y^t(y^2+y+1)^{1-t}-1.
\]
and let $R(t)$ denote the unique root of $h_t(y)$ in $(0,1)$ (so that $R\left(\frac{a}{a+b}\right) = r_{a,b}$).   We claim that $R$ is a continuous function of $t$ on $(0,1)$ with range $(0,1)$.  For a fixed $t$, let $\rho=R(t)$, so that $\rho^t(\rho^2+\rho+1)^{1-t}=1$. Taking logarithms on both sides and solving for $t$, we find 
\[
t=\frac{\log(\rho^2+\rho+1)}{\log\left(\rho+1+\tfrac{1}{\rho}\right)}.
\]
It is easily checked that this expression for $t$ is a strictly increasing function of $\rho$ with domain $(0,1)$ and range $(0,1)$.  In other words, $R^{-1}:(0,1)\rightarrow (0,1)$ is a strictly increasing function, and thus by Proposition~\ref{ContinuousInverse}, we conclude that $R:(0,1)\rightarrow (0,1)$ is a continuous function. Since the set 
\[ Q=\left\{ \tfrac{a}{a+b} : a \mbox{ and } b \mbox{ are positive integers, a odd} \right\}\]
in dense in $(0,1)$, and $\mathcal{R}_g$ is precisely the image of $Q$ under the continuous map $R$, we conclude that $\mathcal{R}_g$ is dense in the range of $R$, namely $(0,1)$.  This completes the proof.
\end{proof}

\section{Concluding remarks and open problems}\label{Conclusion}

In this article, we demonstrated that the roots of the subtree polynomial lie in the disk $D=\left\{z\in\mathbb{C}\colon\ |z|\leq 1+\sqrt[3]{3}\right\}$, and that $K_{1,3}$ is the unique tree with a subtree root on the boundary of $D$.  We make the following stronger conjecture, which we have verified for all $n\leq 18$.

\begin{conjecture}\label{DiskConjecture}
Let $T$ be a tree of order $n\geq 2$.  Then the subtree roots of $T$ lie in the disk $D_n=\left\{z\in\mathbb{C}\colon\ |z|\leq 1+\sqrt[n-1]{n-1}\right\}$.
\end{conjecture}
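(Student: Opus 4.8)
The plan is to mirror the proof of Theorem~\ref{GlobalBound}, but with the fixed threshold $1+\sqrt[3]{3}$ replaced by the order-dependent radius $r_n = 1+\sqrt[n-1]{n-1}$. Writing $\Phi_T = \Phi_{T,v} + \Phi_{T-v}$ as in (\ref{PhiDecomposition}) and applying the reverse triangle inequality, it suffices to produce, for each tree $T$ of order $n$, a vertex $v$ with $|\Phi_{T,v}(z)| > |\Phi_{T-v}(z)|$ whenever $|z| > r_n$. The radius $r_n$ is calibrated precisely for this task: since $(r_n-1)^{n-1} = n-1$, the lower bound $|\Phi_{T,v}(z)| \ge |z|(|z|-1)^{n-1}$ of Lemma~\ref{LocalBound} (applicable because $r_n > 2$) is exactly of the right order of magnitude at $|z| = r_n$. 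The cases $n \in \{2,3,4\}$ I would handle directly: for $n = 2,3$ one computes the (at most one) tree of that order explicitly, and for $n = 4$ the statement is Theorem~\ref{GlobalBound}, with $K_{1,3}$ accounting for the boundary. I would then induct on $n$ for $n \ge 5$.

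For the inductive step I would take $v$ to be a leaf with neighbour $u$, so that $T-v$ is a single tree $T_2$ of order $n-1$; this choice keeps the deleted component as large as possible. By (\ref{PhiProduct}), $\Phi_{T,v} = z(1 + \Phi_{T_2,u})$ and $\Phi_{T-v} = \Phi_{T_2}$, so $|\Phi_{T,v}(z)| \ge |z|\bigl(|\Phi_{T_2,u}(z)| - 1\bigr)$, while the local-dominance inductive hypothesis for $T_2$ gives $|\Phi_{T_2}(z)| \le 2|\Phi_{T_2,u}(z)|$. Exactly as in Case~I of Theorem~\ref{GlobalBound}, the desired inequality then reduces to $(|z|-2)(|z|-1)^{n-2} > 1$. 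Evaluating the left-hand side at $|z| = r_n$ and using $(r_n-1)^{n-1} = n-1$ yields $(n-1) - (n-1)^{(n-2)/(n-1)}$, which exceeds $1$ for every $n \ge 5$ (it grows like $\log(n-1)$); this is the reassuring confirmation that $r_n$ is the correct radius. The general non-leaf decomposition would be organized into the same three cases as in Theorem~\ref{GlobalBound}, applying Lemma~\ref{LocalBound} on each side of each edge split.

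The main obstacle is a mismatch of thresholds in the induction. The factor-of-two bound $|\Phi_{T_2}| \le 2|\Phi_{T_2,u}|$ rests on the local-dominance inequality $|\Phi_{T_2-u}| \le |\Phi_{T_2,u}|$ for the order-$(n-1)$ tree $T_2$, which the inductive hypothesis supplies only on $|z| > r_{n-1}$. Since $r_{n-1} > r_n$ for all $n \ge 5$, there remains a thin annulus $r_n < |z| \le r_{n-1}$ on which the step is unjustified, and the analogous gap appears for every subtree arising in the non-leaf cases. One cannot close this gap by crude modulus estimates: bounding $|\Phi_{T_2-u}(z)|$ by its value at $|z|$ and then by the star (the tree with the most subtrees) overestimates by an exponential factor, because the middle coefficients of the star are exponentially large in $n$, whereas the target bound at $|z| = r_n$ is only polynomial in $n$. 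Thus the cancellation encoded in local dominance is indispensable, and the real content is to establish local dominance uniformly down to $|z|$ only slightly above $2$.

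Consequently, the step I expect to be hardest---and which I believe is the reason the statement is still only a conjecture---is proving an \emph{order-independent} local-dominance bound: an inequality of the form $|\Phi_{S-u}(z)| \le |\Phi_{S,u}(z)|$ valid for all trees $S$ and all $|z| \ge 2$, apart from a controlled family of small exceptions near $K_{1,3}$. If local dominance genuinely fails for some tree in the range $1+\sqrt{2} < |z| < 1+\sqrt[3]{3}$ (while $\Phi_S$ itself remains nonzero there), then no argument routed through local dominance can succeed, and one would instead need a refined inductive invariant tracking the \emph{argument} of $\Phi_{T,v}(z)/\Phi_{T-v}(z)$, not merely its modulus. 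Constructing such a phase-sensitive invariant that is compatible with the edge-decomposition (\ref{PhiProduct}) is, I expect, the key missing ingredient.
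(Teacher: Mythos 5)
You were asked to prove Conjecture~\ref{DiskConjecture}, and it is important to be clear at the outset that the paper contains no proof of this statement: it is presented as an open conjecture, supported only by computational verification for all $n\leq 18$ and by the remark that the star $K_{1,n-1}$ itself can be shown to have all its subtree roots in $D_n$. So there is no proof to compare yours against, and your proposal---which explicitly declines to claim a proof---arrives at the correct conclusion. Moreover, your diagnosis of the obstruction is accurate and concretely stated. The radius $r_n=1+\sqrt[n-1]{n-1}$ is maximized at $n=4$ and strictly decreasing for $n\geq 4$, so in any induction on order the hypothesis for the component $T_2$ of order $n-1$ is available only on $|z|>r_{n-1}$, which leaves the annulus $r_n<|z|\leq r_{n-1}$ uncovered; this threshold mismatch is precisely what prevents the argument of Theorem~\ref{GlobalBound} from being recycled with a shrinking radius, and it infects all three cases of that proof, not just the leaf case. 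Your calibration computation is also correct: with $v$ a leaf, Case~I reduces to $(|z|-2)(|z|-1)^{n-2}>1$, and at $|z|=r_n$ the identity $(r_n-1)^{n-1}=n-1$ makes the left side equal to $(n-1)-(n-1)^{(n-2)/(n-1)}$, which grows like $\log(n-1)$ and exceeds $1$ for $n\geq 5$. So the radius is indeed of the right order for this step; the failure is in the inductive supply of local dominance, exactly as you say.

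Two remarks on what the paper suggests that your proposal does not consider. First, the paper's own proposed route is structural rather than analytic: it conjectures that for every $n$ the star $K_{1,n-1}$ is the unique tree of order $n$ whose subtree roots have maximum modulus, and it notes that the star's roots are easily shown to lie in $D_n$ (for the star, $\Phi_{K_{1,n-1}}(x)=x\left[(1+x)^{n-1}+(n-1)\right]$, so a nonzero root satisfies $|1+x|^{n-1}=n-1$, i.e.\ $|1+x|=\sqrt[n-1]{n-1}$, giving membership in $D_n$ immediately---in fact the roots lie on a circle centred at $-1$). Thus Conjecture~\ref{DiskConjecture} would follow from an extremality statement about stars, which is a genuinely different kind of missing ingredient than the phase-sensitive invariant you propose; whether it is more tractable is unclear, but it at least localizes the difficulty in a single comparison rather than in a uniform local-dominance bound. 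Second, a small correction to your base cases: the conjecture's radius is attained on the boundary not only by $K_{1,3}$ at $n=4$ but also by $K_2$ at $n=2$ (whose nonzero root is $-2=r_2$), so any eventual induction must tolerate equality in more than one base case. Neither remark changes your verdict: the statement is open, your proposed approach fails where you say it fails, and your identification of the need for an order-independent local-dominance bound (or a replacement for it) is a fair description of the gap between Theorem~\ref{GlobalBound} and Conjecture~\ref{DiskConjecture}.
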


For $n\leq 18$, the star $K_{1,n-1}$ is the unique tree with a subtree root of maximum modulus among all trees of order $n$, and we conjecture that this is true for all $n$.  It is straightforward to show that the subtree roots of $K_{1,n-1}$ lie in the disk $D_n$ for all $n\geq 2$.

We also found that the intervals $\left(-\infty,-1-\sqrt[3]{3}\right)$, $[-1,0)$, and $(0,\infty)$ are free of subtree roots, and that the closure of the collection $\mathcal{R}$ of real subtree roots of all trees contains the interval $[-2,-1]$.  What can be said about the real subtree roots in the interval $(-1-\sqrt[3]{3},-2)$?  If Conjecture~\ref{DiskConjecture} is correct, then for any real number $t\in (-1-\sqrt[3]{3},-2)$, there are only finitely many trees with subtree roots of modulus at least $|t|$, and hence only finitely many real subtree roots in the interval $(-1-\sqrt[3]{3},t]$.  Therefore, if Conjecture~\ref{DiskConjecture} is correct, then the closure of $\mathcal{R}$ is exactly $\mathcal{R}\cup[-2,-1]$.

\begin{figure}[!h]
    \centering
    \begin{tikzpicture}
    \node[inner sep=0pt,left] (plot) at (0,0)
    {\includegraphics[scale=0.5]{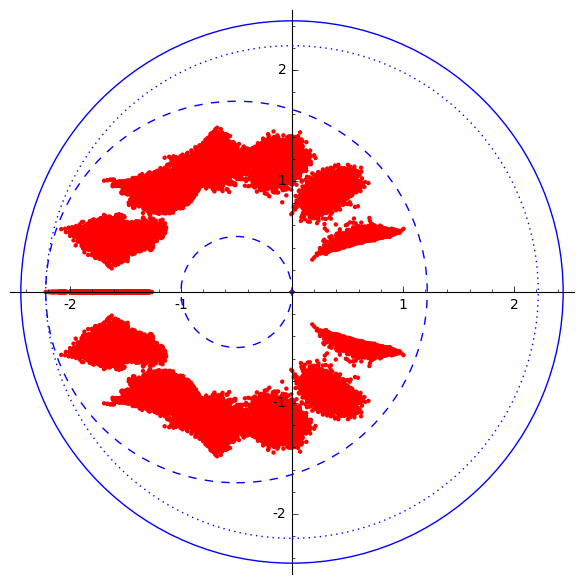}};
    \node[right] at (-0.05,-0.05) {\footnotesize $\mbox{Re}(z)$};
    \node[above] at (-3.7,3.55) {\footnotesize $\mbox{Im}(z)$};
    \end{tikzpicture}
    \caption{The subtree roots of all trees of order $14$, the disk $\left\{z\in\mathbb{C}\colon\ |z|=1+\sqrt[3]{3}\right\}$ of Theorem~\ref{GlobalBound} (solid line), the disk $\left\{z\in\mathbb{C}\colon\ |z|=1+\sqrt[13]{13}\right\}$ of Conjecture~\ref{DiskConjecture} (dotted line), and the annulus $\left\{z\in\mathbb{C}\colon\ \tfrac{1}{2}\leq \left|z+\tfrac{1}{2}\right|\leq \tfrac{1}{2}+\sqrt[13]{13}\right\}$ of Conjecture~\ref{AnnulusConjecture} (dashed lines).}
    \label{Roots14Conjecture}
\end{figure}

Finally, we note that the collection of complex subtree roots of all trees actually appears to be centred loosely around the point $-1/2$ in the complex plane.  In fact, we make the following conjecture that strengthens Conjecture~\ref{DiskConjecture}. 

\begin{conjecture}\label{AnnulusConjecture}
If $T$ is a tree of order $n\geq 2$, then the subtree roots of $T$ are contained in the annulus
\[
\left\{z\in\mathbb{C}\colon\ \tfrac{1}{2}\leq \left|z+\tfrac{1}{2}\right|\leq \tfrac{1}{2}+\sqrt[n-1]{n-1}\right\}.
\]
\end{conjecture}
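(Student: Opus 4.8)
The plan is to treat the two boundaries of the annulus separately, since each generalizes a different result already in the paper. Writing $z=x+iy$, the inner condition $|z+\tfrac12|\geq\tfrac12$ is equivalent (for $z\neq 0$) to $|z|^2+\operatorname{Re}(z)\geq 0$, that is, to $\operatorname{Re}(1/z)\geq -1$; so it asserts that the open disk $D^\circ=\{z\in\mathbb{C}\colon |z+\tfrac12|<\tfrac12\}$, whose real diameter is exactly the interval $(-1,0)$, contains no subtree roots. This is the complex extension of Theorem~\ref{Derivative}, which already gives root-freeness on $(-1,0)$. The outer condition $|z+\tfrac12|\leq\tfrac12+\sqrt[n-1]{n-1}$ is a sharpening of Conjecture~\ref{DiskConjecture}: re-centering at $-\tfrac12$ reflects that, because all coefficients $s_k(T)$ are positive, the roots bulge toward the negative reals, so the disk of Conjecture~\ref{DiskConjecture} may be trimmed on its right-hand side. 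Both boundaries are tight for $K_{1,n-1}$, whose nonzero roots solve $(1+z)^{n-1}=-(n-1)$ and hence lie on the circle $|z+1|=\sqrt[n-1]{n-1}$ centered at $-1$; this circle is inscribed in the annulus and meets the outer boundary at $z=-1-\sqrt[n-1]{n-1}$ (a genuine root when $n$ is even). I would keep the star's root locus in view throughout as the extremal configuration any argument must respect.

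For the inner boundary I would try to promote the proof of Theorem~\ref{Derivative} to the disk $D^\circ$. There the engine was that $-1<\Phi_{T,v}(x)<0$ on $(-1,0)$, equivalently $1+\Phi_{T,v}(x)\in(0,1)$, an interval preserved under the rooted recursion $\Phi_{T,v}(z)=z\prod_i[1+\Phi_{T_i,u_i}(z)]$ of (\ref{PhiProduct}). The natural complex analogue is to seek a region $\Lambda(z)\subseteq\mathbb{C}$, defined for each $z\in D^\circ$, with $z\in\Lambda(z)$ (the single-vertex base case) and closed under the map $(w_1,\dots,w_d)\mapsto z\prod_i(1+w_i)$; then $\Phi_{T,v}(z)\in\Lambda(z)$ would follow by induction, and one would arrange that $\Lambda(z)$, together with the decomposition $\Phi_T=\Phi_{T,v}+\Phi_{T-v}$, forces $\Phi_T(z)\neq 0$. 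The obstacle here is that $D^\circ$ itself is \emph{not} invariant: if $w\in D^\circ$ then $1+w$ lies in the disk centered at $\tfrac12$ and tangent to the imaginary axis at $0$, so each factor $1+w_i$ sits in the open right half-plane, but a product of $d$ such factors can have argument approaching $d\pi/2$ and so escapes any fixed half-plane. The real-line argument evaded this because products of reals in $(0,1)$ stay in $(0,1)$; the crux is therefore to find the correct $z$-dependent invariant region controlling argument accumulation, precisely the difficulty met in zero-free-region proofs for the independence polynomial.

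An alternative route for the inner boundary is via reciprocals: the nonzero roots $z$ of $\Phi_T$ are the reciprocals of the roots $w=1/z$ of the reversed polynomial $\sum_{k=1}^n s_k(T)\,w^{n-k}$, and the condition $\operatorname{Re}(1/z)\geq -1$ becomes the half-plane containment $\operatorname{Re}(w)\geq -1$ for these $w$. After the substitution $w=-1+\zeta$ this is the requirement that $\sum_k s_k(T)(-1+\zeta)^{n-k}$ have all roots in $\operatorname{Re}(\zeta)\geq 0$, a (weak) Hurwitz-type condition amenable in principle to a Hermite--Biehler or Routh--Hurwitz analysis, reducing the claim to sign conditions on determinants formed from the $s_k(T)$. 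I expect this to be no easier, since it would hinge on structural inequalities among the subtree counts of the kind related to the unimodality questions raised in the introduction.

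For the outer boundary I would adapt Section~\ref{BoundSection}, re-deriving Lemma~\ref{LocalBound} and the domination step $|\Phi_{T,v}(z)|>|\Phi_{T-v}(z)|$ with all distances measured from $-\tfrac12$ rather than from the origin, aiming to show $\Phi_T(z)\neq 0$ whenever $|z+\tfrac12|>\tfrac12+\sqrt[n-1]{n-1}$. The main obstacle, and the hardest single step of the whole program, is that the target radius now depends on $n$, which breaks the induction that worked for the fixed radius $1+\sqrt[3]{3}$: passing to a branch $T_i$ of smaller order $n_i<n$ one confronts a \emph{larger} conjectured radius $\tfrac12+\sqrt[n_i-1]{n_i-1}$, so for $z$ in the shell $\tfrac12+\sqrt[n-1]{n-1}<|z+\tfrac12|\leq\tfrac12+\sqrt[n_i-1]{n_i-1}$ the inductive hypothesis for $T_i$ yields nothing. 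This is exactly why Conjecture~\ref{DiskConjecture} has so far only been verified computationally. To escape it I would abandon the branch induction in favour of a global, coefficient-based bound: estimate the $s_k(T)$ directly (with $K_{1,n-1}$ maximizing the low-order counts) and feed them into a Cauchy--Fujiwara-type root bound tailored to the off-center disk $|z+\tfrac12|\leq\tfrac12+\sqrt[n-1]{n-1}$. Since even the weaker, origin-centered form in Conjecture~\ref{DiskConjecture} remains open, I would regard this outer bound as the principal barrier to the full conjecture.
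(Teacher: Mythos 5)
The statement you were asked to prove is Conjecture~\ref{AnnulusConjecture}, which the paper itself does not prove: it is posed as an open problem, supported only by computational verification for all $n\leq 18$, and it strengthens Conjecture~\ref{DiskConjecture}, which is also open. So there is no proof in the paper to compare against, and the relevant question is whether your proposal closes the gap. It does not; it is a research plan whose two essential steps are both explicitly left unexecuted. Your preliminary observations are correct and useful: the equivalence $|z+\tfrac12|\geq\tfrac12\Leftrightarrow\operatorname{Re}(1/z)\geq-1$ for $z\neq 0$; the fact that the inner disk's real diameter $(-1,0)$ is precisely the interval handled by Theorem~\ref{Derivative}; and the computation that the nonzero roots of $\Phi_{K_{1,n-1}}$ satisfy $(1+z)^{n-1}=-(n-1)$, hence lie on the circle $|z+1|=\sqrt[n-1]{n-1}$, which is contained in the conjectured annulus and touches its outer boundary at $-1-\sqrt[n-1]{n-1}$. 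But a correct diagnosis of why a problem is hard is not a proof of the statement.

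Concretely, two gaps. For the inner boundary, your plan is to construct a $z$-dependent region $\Lambda(z)$ invariant under the recursion (\ref{PhiProduct}) and then ``arrange'' that membership of $\Phi_{T,v}(z)$ in $\Lambda(z)$ forces $\Phi_T(z)\neq 0$; you then observe, correctly, that the naive candidate region fails because arguments of the factors $1+\Phi_{T_i,u_i}(z)$ accumulate under multiplication. That observation is the entire difficulty, and no invariant region is produced. Moreover, the final ``forcing'' step cannot be a modulus-domination inequality in the style of Section~\ref{BoundSection}: near $z=0$ (which lies \emph{on} the inner circle, being a root of every subtree polynomial) we have $\Phi_{T,v}(z)=z+O(z^2)$ while $\Phi_{T-v}(z)=(n-1)z+O(z^2)$, so $|\Phi_{T-v}(z)|>|\Phi_{T,v}(z)|$ throughout a neighbourhood of $0$, and any argument inside the disk $|z+\tfrac12|<\tfrac12$ must exploit cancellation of arguments, not just moduli. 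For the outer boundary, you correctly identify that the $n$-dependent radius breaks the induction of Theorem~\ref{GlobalBound}, since the inductive hypothesis for a branch of order $n_i<n$ only controls the (generally larger) disk of radius $\tfrac12+\sqrt[n_i-1]{n_i-1}$; but your proposed replacement --- estimate the coefficients $s_k(T)$ and apply a Cauchy--Fujiwara-type bound adapted to the disk centred at $-\tfrac12$ --- is a single sentence with no estimates, no lemma, and no reason to believe the resulting radius would be $\tfrac12+\sqrt[n-1]{n-1}$ rather than something strictly weaker. In short: the proposal accurately maps the terrain but proves nothing, which is exactly the state in which the paper leaves this conjecture.
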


We have confirmed Conjecture~\ref{AnnulusConjecture} for all $n\leq 18$.  The subtree roots of all trees of order $14$, along with the disk $D$ that we have proven contains all subtree roots, the disk of Conjecture~\ref{DiskConjecture}, and the annulus of Conjecture~\ref{AnnulusConjecture} are illustrated in Figure~\ref{Roots14Conjecture}.  What can be said about the closure of the collection of complex subtree roots of all trees?  Does it contain the entire annulus $\left\{z\in\mathbb{C}\colon\ \tfrac{1}{2}\leq \left|z+\tfrac{1}{2}\right|\leq \tfrac{3}{2}\right\}$?

\section*{Acknowledgements}

We wish to thank Shannon Ezzat for a conversation which was helpful in completing the proof of Theorem~\ref{Closure}.

\providecommand{\bysame}{\leavevmode\hbox to3em{\hrulefill}\thinspace}
\providecommand{\MR}{\relax\ifhmode\unskip\space\fi MR }
\providecommand{\MRhref}[2]{%
  \href{http://www.ams.org/mathscinet-getitem?mr=#1}{#2}
}
\providecommand{\href}[2]{#2}

\end{document}